\theoremstyle{thmstyleone}%
\newtheorem{theorem}{Theorem}
\theoremstyle{thmstyletwo}%
\newtheorem{example}{Example}%
\newtheorem{remark}{Remark}%
\theoremstyle{thmstylethree}%
\newtheorem{lemma}{Lemma}%
\begin{document}

\title[Article Title]{\Large  Local Multilevel Preconditioned Jacobi-Davidson Method for Elliptic Eigenvalue Problems on Adaptive Meshes}


\author[1]{\fnm{Jianing} \sur{Guo}}\email{2111167@tongji.edu.cn}
\author*[1]{\fnm{Qigang} \sur{Liang}}\email{qigang$\_$liang@tongji.edu.cn}
\author[1]{\fnm{Xuejun} \sur{Xu}}\email{xuxj@tongji.edu.cn}
\affil[1]{\orgdiv{School of Mathematical Science, Tongji University, Shanghai, 200092, China and Key Laboratory of Intelligent Computing and Applications, Tongji University, Ministry of Education}}  

\abstract{In this work, we propose an efficient adaptive multilevel preconditioned Jacobi-Davidson (PJD) method for eigenvalue problems with singularity. Our multilevel method utilizes a local smoothing strategy to solve the preconditioned Jacobi-Davidson algebraic systems arising from adaptive finite element methods (AFEM). As a result, the algorithm holds optimal computational complexity $O(N)$. The theoretical analysis reveals that our method has a uniform convergence rate with respect to mesh levels and degrees of freedom. Further, the convergence rate is not affected by highly discontinuous coefficients within the domain. Numerical results verify our theoretical findings.}

\keywords{PDE eigenvalue problems, adaptive finite elements, local multilevel method, preconditioned Jacobi-Davidson method}



\maketitle

\section{Introduction}\label{sec1}

\par Developing efficient numerical methods for solving large-scale eigenvalue problems plays an important role in scientific and engineering computations. This paper focuses on eigenvalue problems with singularity, which are commonly encountered in practical applications, like non-convex domains or highly discontinuous coefficients.
These singularities present a significant challenge in numerical solutions, as they degrade the accuracy of finite element methods (FEM), especially when a quasi-uniform triangulation is used. Uniformly refining the entire grid often leads to exponential growth with respect to the scale of the discrete system, making it impractical for large-scale eigenvalue problems. To overcome this difficulty, adaptive finite element methods (AFEM) have been rapidly developed, since they may effectively capture the singular behavior of eigenfunctions. The AFEM procedure involves four steps: 
\begin{equation}\notag
\text { SOLVE } \rightarrow \text { ESTIMATE } \rightarrow \text { MARK } \rightarrow \text { REFINE. }
\end{equation}
Although the ESTIMATE step has been extensively studied (\cite{dai2008convergence, czm2002, MR3407250, MR3347459, MR4136540}), the design of algorithms and the corresponding convergence analysis for the SOLVE step in AFEM for solving eigenvalue problems remain limited. Notably, Carstensen and Gedicke \cite{MR2970733}  demonstrate that the adaptive finite element method with a suitable iterative algebraic eigenvalue solver holds asymptotic quasi-optimal computational complexity. However, it is still a great challenge to design an efficient eigensolver for solving eigenfunction on adaptive meshes, particularly due to the incompatibility of traditional preconditioners liking overlapping domain decomposition preconditioners \cite{MR2104179} with locally refined grids. In this paper, we propose an efficient adaptive local multilevel preconditioned Jacobi-Davidson (PJD) method to solve the discrete elliptic eigenvalue algebraic systems generated by the AFEM.
\par The Jacobi-Davidson (JD) method has gained popularity in the last decade. However, its correction operator is often ill-conditioned when dealing with large-scale discrete eigenvalue problems. Thus, the design of effective preconditioning techniques becomes crucial. Zhao et al.\,\cite{MR3499456} propose a parallel two-level domain decomposition based Jacobi–Davidson algorithms for pyramidal quantum dot simulation. Wang and Xu \cite{wang2019convergence} give a rigorous analysis for a two-level preconditioned Jacobi-Davidson method based on the overlapping domain decomposition for computing the principal eigenpairs of $2m$-th order ($m=1,2$) elliptic eigenvalue problems. Recently, a two-level block preconditioned Jacobi-Davidson method has been proposed in \cite{liang2022two} to compute the first several eigenpairs, including multiple and clustered cases. Although these methods significantly reduce computational costs, we should note that domain decomposition methods typically require quasi-uniform grids, which may limit their applicability in computing discrete systems resulting from the AFEM approximations for eigenvalue problems with singularity. Our proposed method overcomes this limitation by introducing a novel adaptive local multilevel preconditioning strategy, specifically designed for eigenvalue algebraic systems arising from AFEM.

\par Multilevel methods are well-suited as preconditioners for algebraic systems generated by the AFEM and have been extensively studied in the literature (\cite{bramble2019multigrid, chen2015multigrid} and references therein). Mitchell \cite{mitchell1992optimal} points out that traditional multilevel algorithms always perform smoothing at all nodes, which may result in the worst computational complexity $O(N^2)$. Numerical results in \cite{mitchell1992optimal} demonstrate that local smoothing may achieve optimal computational complexity, i.e., $O(N)$. Building upon this, Wu and Chen \cite{wu2006uniform} propose an adaptive local multilevel algorithm, which only performs Gauss-Seidel smoothing on new nodes and old nodes whose corresponding basis function support has been changed. Subsequently, Xu et al.\,\cite{xu2010optimality} propose a detailed strategy for determining the locations of local smoothing and demonstrate the convergence of the local multilevel algorithm. Specifically, their proposed preconditioners are designed for linear source problems but not suitable for eigenvalue problems.
\par Motivated by the background above, we propose an adaptive multilevel PJD method to deal with eigenvalue problems with singularity. This method combines local successive subspace correction with shifted techniques to design a local multilevel preconditioner for solving the Jacobi-Davidson correction equation in each outer iteration. As a result, we only need to compute a small-scale eigenvalue problem, significantly reducing the computational cost compared to the original eigenvalue problem. Through a rigorous convergent analysis, we prove that our method has a uniform convergence rate independent of mesh levels and degrees of freedom. Further, the convergence rate remains stable for highly discontinuous coefficients. Numerical results demonstrate that our method accurately captures the locations of singularities while achieving the optimal computational complexity $O(N)$. 
\par The outline of this paper goes as follows. In Section 2, we introduce the model problem and some preliminaries. Section 3 presents the proposed adaptive multilevel PJD method. In Section 4, we give some lemmas used for the error estimates of the proposed method. The uniform convergence rate of the iterative method is established in Section 5. Section 6 presents numerical results which support our theoretical findings.

\section{Model problem}\label{sec2}

\par In this section, we introduce the model problem and some basic notations. We adopt the standard notations for Sobolev spaces $H^s(\Omega)$ with their corresponding norms $\| \cdot\|_{s,\Omega}$ and seminorms $| \cdot|_{s,\Omega}$. In particular, we define $H_0^1(\Omega)$ as a subspace of $H^1(\Omega)$ with a vanishing trace.
\par Consider the elliptic eigenvalue problem
\begin{equation}\label{modelproblem}
   \begin{cases}
      -\nabla \cdot (\rho(x)\nabla u) =\lambda u\ \ \ \ &\text{in} \ \ \Omega,\\
     \ \ \ \ \ \ \ \ \ \ \ \ \ \ \ \ \,\ u=0\ \ \ \  &\text{on} \ \ \partial \Omega,
   \end{cases}
\end{equation}
where $\Omega $ is a polygonal domain in $\mathbb{R}^2$. The coefficient $\rho(x)\left(\rho(x)\geq 1 \right)$ is a piecewise constant function that may exhibit large jumps in $\Omega$. In this paper, we are interested in the first eigenvalue and corresponding eigenfunction. The variational form of \eqref{modelproblem} is to find $(\lambda, u) \in \mathbb{R} \times V$ such that
\begin{equation}\label{variational}
a_{\rho} (u,v) = \lambda b(u,v) \ \ \ \  \forall\ v \in V,
\end{equation}
where $V:= H_0^1(\Omega)$, the bilinear forms $a_{\rho}(\cdot,\cdot): V \times V \to \mathbb{R}$ and $b(\cdot,\cdot): L^2(\Omega) \times L^2(\Omega) \to \mathbb{R}$ are defined as follows:
\begin{align*}
a_{\rho}(u,v) &:= \int_{\Omega} {\rho}(x) \nabla u \cdot \nabla v dx \ \ \ \  \forall\ u, v \in V, \\
b(u,v) &:= \int_{\Omega} u \cdot v  dx \ \ \ \ \quad \quad \quad \ \, \forall\ u, v \in L^2(\Omega).
\end{align*}
Since both bilinear forms are symmetric and positive-definite, we may define the norms $\|v\|_A^2:=a_{\rho}(v,v)$ for all $v\in V$, $\|v\|_0^2:=b(v,v)$ for all $v \in L^2(\Omega)$. 
The eigenvalues of \eqref{variational} are (see \cite{babuvska1989finite})
\begin{equation*}
    0<\lambda_1 \leq \lambda_2 \leq \lambda_3 \leq \cdots \leq \lambda_n \to +\infty,
\end{equation*}
and the corresponding eigenvectors are
\begin{equation*}  
u_1,u_2,u_3,\cdots,u_n,\cdots,
\end{equation*}
which satisfy $a_{\rho}(u_i,u_k)=\lambda_i b(u_i,u_k)=\lambda_{i} \delta_{ik}$ with $\delta_{ik}$ being Kronecker delta.

\par To obtain the finite element discretization, we begin with a quasi-uniform and shape-regular triangulation denoted as $\mathcal{T}_0$, with $h_0$ representing the maximal diameter of its elements. By using the newest vertex bisection technique \cite{MR2050077}, we generate a series of shape-regular triangulations denoted as $\{\mathcal{T}_l\}_{l=1}^L$. Following this, we consider the piecewise linear and continuous finite element space on $\mathcal{T}_l\ (l =0,1, \cdots, L)$, denoted by $V_{0}\subset V_{1}\subset V_{2}\subset ... \subset V_{L}$. It is worth noting that $L$ is not fixed and will increase as the AFEM procedure progresses. 
\par The discrete variational form of \eqref{modelproblem} on $\mathcal{T}_l $ is to find $(\lambda_{i,l}, u_{i,l}) \in \mathbb{R}\times V_l $ such that
\begin{equation}\label{discretevariational}
    a_{\rho}(u_{i,l},v_{l})=\lambda_{i,l} b(u_{i,l},v_{l})\ \ \ \ \forall\ v_{l} \in V_l,
\end{equation}
where $i=1,\cdots,N_l$ and $N_l=$dim$(V_l).$
The eigenvalues of \eqref{discretevariational} are 
$$\lambda_{1,l} \leq \lambda_{2,l}\leq \cdots \leq \lambda_{{N_l},l} $$
and the corresponding eigenvectors are
$u_{1,l},u_{2,l},\cdots,u_{N_l,l}$, which satisfy 
$$a_{\rho}(u_{i,l},u_{j,l})=\lambda_{i,l} b(u_{i,l},u_{j,l})=\lambda_{i,l} \delta_{ij}\ \ \ \ \forall\ i,j=1,\cdots,N_l.$$
Consider the linear operator $A_{l}: V_l \to V_l$ defined as follows:
\begin{equation}\notag
b(A_{l} v_l,w_l)=a_{\rho}(v_l,w_l)\ \ \ \ \forall\ v_l,w_l\in V_{l}.
\end{equation}
It follows that $A_l u_{i,l} = \lambda_{i,l} u_{i,l}$ for $i=1,2,\ldots,N_l$. 
Moreover, for each $V_l$ $(l = 0, 1, \cdots, L)$, the following spectral decomposition holds:
\begin{equation}\label{directsumonL}
V_l= U_1^l \oplus U_2^l,
\end{equation}
where $U_1^l=$span$\{ u_{1,l}\}$, $\oplus$ represents the orthogonal direct sum with respect to $b(\cdot,\cdot)$ (also $a_{\rho}(\cdot,\cdot)$) and $U_2^l$ denotes the orthogonal complement of $U_1^l$ with respect to $b(\cdot,\cdot)$ (also $a_{\rho}(\cdot,\cdot)$).
Further, we denote by $Q_1^l:V_{l}\to U_{1}^{l},\ Q_2^l:V_{l}\to U_{2}^{l}$ the $b(\cdot,\cdot)$-orthogonal projectors. 

\par To make the ideas clearer, we shall introduce some notations commonly used in local multilevel methods.
Let $\mathcal{M}_l$ denote the set of interior nodes on $\mathcal{T}_l$. For any node $\boldsymbol{z}\in \mathcal{M}_l$ , $\phi_l^{\boldsymbol{z}} $ represents the associated nodal finite element basis function of $V_l$, and $\Omega_l^{\boldsymbol{z}}$ represents the support of $\phi_l^{\boldsymbol{z}}$, that is, $\Omega_l^{\boldsymbol{z}}:= \text{supp}\{\phi_l^{\boldsymbol{z}}\}.$ We define
$\widetilde{\mathcal{M}}_l$ as the set of new nodes and old nodes whose corresponding basis function support has been changed ( Fig.\ref{figure0}),
$$
\widetilde{\mathcal{M}}_l=\{ \boldsymbol{z} \in \mathcal{M}_l : \boldsymbol{z} \in \mathcal{M}_l \backslash \mathcal{M}_{l-1}\  \text{or}\ \boldsymbol{z} \in \mathcal{M}_{l-1}\ \text{but}\ \phi_l^{\boldsymbol{z}} \neq \phi_{l-1}^{\boldsymbol{z}}\}.
$$
For brevity, we use $\widetilde{m}_l$ to represent the cardinality of $\widetilde{\mathcal{M}}_l$ and denote $\phi_{l,i}:=\phi_l^{\boldsymbol{x}_{l,i}}$, $\boldsymbol{x}_{l,i}\in \widetilde{\mathcal{M}}_l$, where $i=1,2,...,\widetilde{m}_{l}$.
\begin{figure}
  \centering
\includegraphics[width=0.5\textwidth]
  {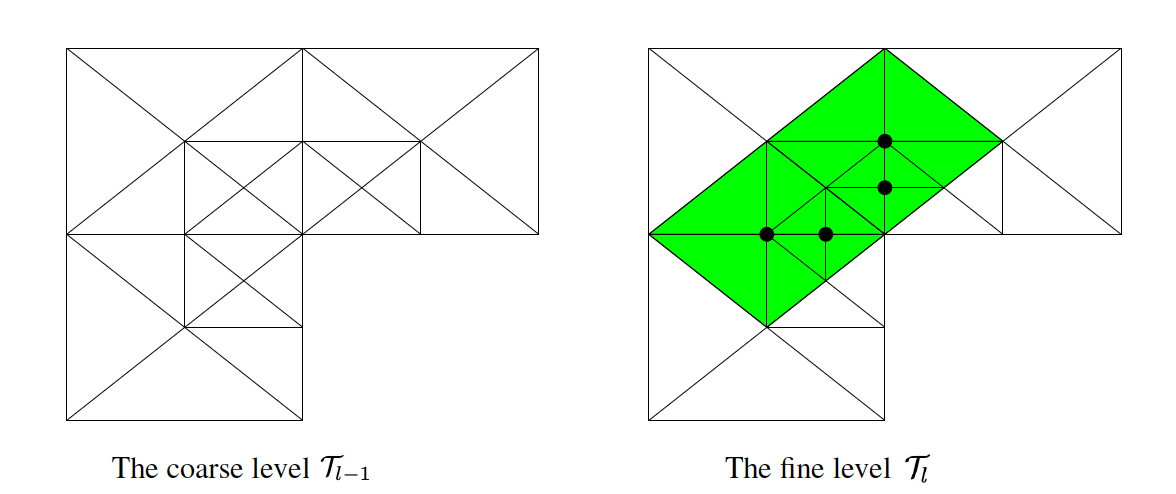}
\caption{$\widetilde{\mathcal{M}}_l\text{ consists of the big dots in the right figure}.$}\label{figure0}
\end{figure}
Then define $Q_{l,i} : V_L \to V_{l,i} :=\text{span}\{ \phi_{l,i}\}$ as a $b(\cdot,\cdot)$-orthogonal projector. We also define a local operator $A_{l,i} : V_{l,i} \to V_{l,i}$ such that
\begin{equation*}
    b(A_{l,i} v_{l,i},w_{l,i})=a_{\rho}(v_{l,i},w_{l,i})\ \ \ \ \forall\ v_{l,i},\ w_{l,i}\in V_{l,i}.    
\end{equation*}
Let $\Omega_{l,i}$ be the support of $\phi_{l,i}$ and $h_{l,i}$ be the diameter of $\Omega_{l,i}$. The minimum eigenvalue of $A_{l,i}$ satisfies 
\begin{equation}\label{localeigenvalue}
    \lambda_{\min}(A_{l,i})\geq O(h_{l,i}^{-2})\geq O(h_{0}^{-2}).
\end{equation}

\section{The adaptive multilevel PJD method}\label{sec3}
\par In this section, we propose an adaptive multilevel PJD method (Algorithm 1) to solve the principal eigenpair of elliptic eigenvalue problems. Our method is specifically designed for the SOLVE step within the AFEM procedure. Assuming the current level in the AFEM procedure is $L$, the iterative solution at $L-1$ has been given by using Algorithm 1, represented as $(\lambda_1^{L-1}, u_1^{ L-1})$. Then, we may apply Algorithm 1 once more to compute the solution $(\lambda_1^{L}, u_1^{L})$ at level $L$. Continuing, we proceed with the remaining steps of the AFEM procedure to move to the next level $L+1$. 
\par For brevity, we define the operators $\tilde{A}_0^{j} :=A_0 -\lambda^{j}I$, $ \tilde{A}_L^{j} :=A_L -\lambda^{j}I$ and $\tilde{A}_{l,i}^{j}:=A_{l,i}-\lambda^{j}I$ $(l=1,2,\cdots,L,\ i=1,2,\cdots,\widetilde{m}_l)$ as follows:
\begin{equation*}
    \begin{array}{lll}
    & b(\tilde{A}_0^{j} v_0,w_0)=(v_0,w_0)_{E^{j}} ,&\forall\ v_0,w_0\in V_0,\\
        & b(\tilde{A}_L^{j} v_L,w_L)=(v_L,w_L)_{E^{j}} ,&\forall\ v_L,w_L\in V_L,\\
        & b(\tilde{A}_{l,i}^{j} v_{l,i},w_{l,i})=(v_{l,i},w_{l,i})_{E^{j}} &\forall\ v_{l,i},w_{l,i} \in V_{l,i},
    \end{array}
\end{equation*}
where $(\cdot,\cdot)_{E^{j}}= a_{\rho}(\cdot,\cdot)-\lambda^{j}b(\cdot,\cdot)$, $\lambda^{j}$ represents the $j$-th iterative approximation of $\lambda_{1,L}$, and the notation $I$ is the identity operator. Note that while $(\cdot, \cdot)_{E^{j}}$ is a bilinear form, it may not always be an inner product. {For a given $\rho(x)$, based on \eqref{localeigenvalue}, we may choose  sufficiently small $h_0$ to ensure that $\tilde{A}_0^{j}$ and $\tilde{A}_{l,i}^{j}$ are well-defined.} We also denote by $Q_l: V_L \to V_l$ the $b(\cdot,\cdot)$-orthogonal projector ($0 \leq l \leq L-1$). Additionally, for any subspace $U \subset V_L$, we define $ U^{\perp}:=\{v: b(v,w)=0,\ \forall\ v \in V_L,\ w \in U\},$
and denote by $Q_U:V_L\to U$ the $b(\cdot,\cdot)$-orthogonal projector. The key idea of our method is to design a local multilevel preconditioner $ B_L^{j}$ (see Definition 3.1) to solve the Jacobi-Davidson correction equation \cite{MR1778354}:
\begin{equation}\label{correctionequation}
    \left\{
    \begin{aligned}
       & \text{Find}\  t^{j+1} \in (U^{j})^{\perp} \ \text{such that }\\
       &  b((A_L-\lambda^{j}I)t^{j+1},v)=b(r^{j},v)\ \ \ \ \forall\ v \in (U^j)^{\perp},
    \end{aligned}
    \right.
\end{equation}
where $U^j=$span$\{ u^{j}\},\ u^{j}$ is the $j$-th iterative approximation of $u_{1,L}$ and $r^{j}=\lambda^{j} u^{j} - A_L u^{j}.$ 
\begin{table}[h]
\centering
\begin{tabular}{p{15cm}}
\hline
\hline
\textbf{Algorithm 1} {The JD method with local multilevel preconditioner} \\
\hline
$\bf{{Step\ 1}}$ Input: $\lambda^{0}=\lambda_1^{L-1},\ u^{0}=u_1^{L-1}$. Set $W^0=$span$\{u^{0}\}.$\\
$\bf{{Step\ 2}}$ For $j=0, 1, 2, ...,$ solve \eqref{correctionequation} inexactly through solving a preconditioned system:
\begin{equation}\label{jdequation}
{t}_L^{j+1}=Q_{\bot}^j B_{L}^{j} r^{j},
\end{equation}
\ \ \ \ \ \ \ \ \ \ \ where $B_{L}^{j}$ shall be defined in Definition 3.1 and $Q_{\bot}^{j}:=(I-Q_{U^j})$.\\
$\bf{{Step\ 3}}$ Solve the first eigenpair in  $W^{j+1}$:\\
\begin{equation}\notag
a_{\rho}(u^{j+1},v)=\lambda^{j+1}b(u^{j+1},v)\ \ \ \ \forall\ v \in W^{j+1},\ \|u^{j+1}\|_0=1,
\end{equation}
\ \ \ \ \ \ \ \ \ \ \ where $W^{j+1}=W^{j}+$\text{span}$\{{t}^{j+1}_L\}$. Set $U^{j+1}=$span$\{u^{j+1}\}.$\\
$\bf{{Step\ 4}}$ If $|\lambda^{j+1}-\lambda^{j}|<tol$, $(\lambda_{1}^{ L},u_{1}^{L})\leftarrow (\lambda^{j+1},u^{j+1})$. Otherwise, goto $\bf{{Step\ 2}}$.\\
\hline
\hline
\end{tabular}
\end{table}
\par Next, we introduce the definition of the local multilevel preconditioner $B_L^{j}$.
\begin{table}[h]
\centering

\begin{tabular}{p{15cm}}
\hline
\hline
\textbf{Definition 3.1.} Local multilevel preconditioner $B_L^{j}$\\
\hline
For $L>0$, define $B_{L}^{j} r^{j}$ as follows:\\
\textbf{Step 1} Input: $\tilde{t}_0=0$.\\
\textbf{Step 2} For $l=0,1,\cdots,L-2,L-1$ do:\\
\qquad $\tilde{t}_{l+1}=\tilde{t}_{l}+ R_l^{j} Q_l \bigl(r^{j}-\tilde{A}_L^{j}\tilde{t}_{l}\bigr)$\\
\qquad End.\\
\textbf{Step 3} Output: $B_L^{j} r^{j}=\tilde{t}_{L}$.\\
\hline
\hline
\end{tabular}
\end{table}
Within the algorithm, $R_l^{j}: V_L \to V_{l,i}$ is the local Jacobi smoother given by
\begin{equation}\label{jacobismoother}
   R_l^{j} := \left\{ 
   \begin{aligned}
      & (\tilde{A}_0^{j})^{-1}Q_2^0 Q_0\ \ \ \ \ \ \ \  l=0, \\
      &\gamma \sum_{i=1}^{\widetilde{m}_l} (\tilde{A}_{l,i}^{j})^{-1} Q_{l,i}\ \ 
    \ \ 1\leq l \leq L,
   \end{aligned} \right.
\end{equation}
where $\gamma$ is an appropriately chosen positive scaling factor ($0<\gamma <1$). It is easy to check that $R_l^{j}$ is symmetric in the sense of $L^2$-inner product for $l=0,1,\cdots, L$. 

\section{Preliminaries on multilevel subspaces}
In this section, we introduce some properties of multilevel subspace, which shall be used in the following convergence analysis. 
Throughout this paper, the notations $C$ (with or without subscripts) represent generic positive constants independent of mesh levels, degrees of freedom, and discontinuous coefficients. Note that these constants may be different at different occurrences. To help the readers understand the convergence theory, we emphasize the meanings of the indices. We denote by $l\ (l=0,1,2,...,L)$ the level of adaptive meshes, $j\ (j=1,2,...)$ the current number of iteration in Algorithm 1, $i\ (i=1,2,...,\widetilde{m}_{l})$ the index of the location for the local smoothing on the $l$-th level mesh.
\par First, based on Algorithm 1, we have
$$\text{span}\{ u^{j} \}+\text{span}\{{t}_L^{j+1}\}\subset W^j + \text{span}\{{t}_L^{j+1}\} = W^{j+1} \subset V_L.$$ 
We may set $\lambda^{0}=\tilde{\lambda}_1 < \lambda_{1,0}$ by solving the eigenvalue problem on $\widetilde{\mathcal{T}}_0$ refined from the coarsest mesh $\mathcal{T}_0$. Consequently, we get
\begin{equation}\label{lambdaueq2}
    \lambda_1 \leq \lambda_{1,L} \leq \lambda^{j}\leq \lambda^{0} < \lambda_{1,0} < \lambda_{2,L} \leq \lambda_{2,0},\ \ \ j=0,1,\cdots.
\end{equation}   
   This clearly demonstrates that $(\cdot,\cdot)_{E^{j}}$ is an inner product in $U_2^L$ and $V_0$, inducing the corresponding norm $\|\cdot\|_{E^{j}}$ in both $U_2^L$ and $V_0$. According to \eqref{localeigenvalue}, we may choose sufficiently small $h_0$ to ensure that $(\cdot,\cdot)_{E^{j}}$ is an inner product in $V_{l,i}$, with its induced norm $\|\cdot\|_{E^{j}}$ being equivalent to $\|\cdot\|_A$ in $V_{l,i}$. 
   It is easy to check that the following inequality holds:
\begin{equation}\label{atoej}
  (v,v)_{E^{j}} \leq a_{\rho}(v,v)=(v,v)_{E^{j}}+\lambda^{j}b(v,v)\leq \beta(\lambda_{2,L})(v,v)_{E^{j}}\ \ \ \ \ \forall\ v \in U_2^L,
\end{equation}
where $\beta(\lambda)=1+\frac{\lambda^{j}}{\lambda-\lambda^{j}}.$ Hence, $\|\cdot\|_{E^{j}}$ is equivalent to $\|\cdot\|_A$ in $U_2^L$. Similarly, we may deduce that $\|\cdot\|_{E^{j}}$ and $\|\cdot\|_A$ are also equivalent in $V_0$ and $V_{l,i}\ (l=1,2,\cdots,L,\ i=1,2,\cdots,\widetilde{m}_l)$.
\par To give the a priori results for eigenvalue problems, we denote by $M(\lambda_1)$ the eigenspace corresponding to the eigenvalue $\lambda_{1}$. Define
 \begin{equation*}
     \begin{aligned}
         \delta_l \left(\lambda_1\right)&:=\sup _{w \in M\left(\lambda_1\right),\|w\|_0=1} \inf _{v_l \in V_l}\left\|w-v_l\right\|_{A},\\
          \eta_A \left(V_l\right)&:=\sup _{f \in L^2(\Omega),\|f\|_{0}=1} \inf _{v_l \in V_l}\left\|\psi-v_l\right\|_{A},
     \end{aligned}
 \end{equation*}
 where $\psi$ satisfy
 $a_{\rho}(\psi,v)=b(f,v)$ for all $v\in V$ and $l=0,1,\cdots,L$.
\begin{lemma}\label{ukulemma}  (see \cite{babuvska1989finite,MR2652780}).
    For ($\lambda_{1},u_{1}$), there exists a discrete eigenpair $(\lambda_{1,0},u_{1,0})$ on $V_{0}$ such that
\begin{equation}\notag
    \begin{aligned}
        \left\|u_1-u_{1,0}\right\|_{A} & \leq  C\delta_{0}(\lambda_1), \\
        \left\|u_1-u_{1,0}\right\|_{0} & \leq C \eta_A \left(V_0\right)\left\|u_1-u_{1,0}\right\|_{A}, \\
        \left|\lambda_1-\lambda_{1,0}\right| & \leq C\left\|u_1-u_{1,0}\right\|_{A}^2,
    \end{aligned}
\end{equation}
where the constant $C$ depends on the gap of $\lambda_1$.
\end{lemma}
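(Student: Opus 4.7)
The plan is to establish the three bounds in sequence using standard tools from the spectral approximation theory of self-adjoint operators, and I would follow the classical order: energy-norm first, then the duality-based $L^2$-type estimate, and finally the quadratic eigenvalue estimate.

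For the energy-norm estimate, I introduce the $a_\rho$-orthogonal Ritz projection $P_0: V \to V_0$. Galerkin orthogonality immediately yields $\|u_1 - P_0 u_1\|_A = \inf_{v_0 \in V_0}\|u_1 - v_0\|_A \leq \delta_0(\lambda_1)$ after normalizing $\|u_1\|_0 = 1$ and using the definition of $\delta_0(\lambda_1)$. The remaining task is to compare $P_0 u_1$ with a genuine discrete eigenfunction $u_{1,0}$. Expanding $P_0 u_1 = \sum_{i=1}^{N_0} c_i u_{i,0}$ in the discrete eigenbasis and using that $\|u_1 - P_0 u_1\|_A$ is small together with the discrete spectral gap $\lambda_{2,0} - \lambda_{1,0} > 0$, the components orthogonal to $u_{1,0}$ must be small, yielding $\|P_0 u_1 - c_1 u_{1,0}\|_A \leq C \delta_0(\lambda_1)$ with $C$ depending on the reciprocal of this gap; combining with the Ritz error gives the first inequality.

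For the $L^2$-type estimate, I apply the Aubin--Nitsche duality trick. Let $\psi \in V$ solve $a_\rho(\psi, v) = b(u_1 - u_{1,0}, v)$ for all $v \in V$, so that
\begin{equation*}
    \|u_1 - u_{1,0}\|_0^2 = a_\rho(\psi, u_1 - u_{1,0}).
\end{equation*}
Using the modified Galerkin identity $a_\rho(u_1 - u_{1,0}, v_0) = \lambda_1 b(u_1, v_0) - \lambda_{1,0} b(u_{1,0}, v_0)$ for $v_0 \in V_0$, I subtract $P_0 \psi$ and control the leading term by Cauchy--Schwarz:
\begin{equation*}
    a_\rho(\psi - P_0\psi, u_1 - u_{1,0}) \leq \|\psi - P_0\psi\|_A\,\|u_1 - u_{1,0}\|_A \leq \eta_A(V_0)\,\|u_1 - u_{1,0}\|_0\,\|u_1 - u_{1,0}\|_A.
\end{equation*}
The residual eigenvalue-weighted term is absorbed using the first step and the boundedness of $\lambda_{1,0}$; dividing by $\|u_1 - u_{1,0}\|_0$ gives the second estimate. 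For the eigenvalue estimate, I normalize $\|u_1\|_0 = \|u_{1,0}\|_0 = 1$ with compatible signs and use the identity
\begin{equation*}
    \lambda_{1,0} - \lambda_1 = \|u_1 - u_{1,0}\|_A^2 - \lambda_1 \|u_1 - u_{1,0}\|_0^2,
\end{equation*}
which follows by expanding $a_\rho(u_1 - u_{1,0}, u_1 - u_{1,0})$ and using $a_\rho(u_1, \cdot) = \lambda_1 b(u_1, \cdot)$ plus the discrete counterpart. The second right-hand-side term is of higher order by the $L^2$-estimate (since $\eta_A(V_0)$ is small), delivering the quadratic bound.

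The main obstacle is the first inequality: choosing a discrete eigenfunction $u_{1,0}$ with the correct sign/phase whose direction actually approximates $u_1$ requires exploiting the spectral gap between $\lambda_1$ and $\lambda_2$, which is absorbed into the constant $C$. Once that choice is pinned down, the duality argument and the Rayleigh-quotient identity are essentially mechanical and deliver the remaining two bounds in quick succession.
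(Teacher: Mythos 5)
The paper does not prove this lemma; it is quoted verbatim from the Babu\v{s}ka--Osborn theory in the cited references, so your attempt has to be judged against the standard literature proof. Your first and third steps are essentially that proof and are sound: the energy estimate via the Ritz projection, the expansion in the discrete eigenbasis, and the discrete spectral gap is the classical argument, and the identity $\lambda_{1,0}-\lambda_1=\|u_1-u_{1,0}\|_A^2-\lambda_1\|u_1-u_{1,0}\|_0^2$ (valid under the normalization $\|u_1\|_0=\|u_{1,0}\|_0=1$) gives the eigenvalue bound immediately, even without invoking the $L^2$ estimate, since $\lambda_{1,0}\ge\lambda_1$ by min--max.

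The duality step, however, has a genuine gap. Writing $e=u_1-u_{1,0}$ and $\psi$ for the dual solution, the lack of Galerkin orthogonality leaves you with $a_\rho(P_0\psi,e)=\lambda_1 b(u_1,P_0\psi)-\lambda_{1,0}b(u_{1,0},P_0\psi)=\lambda_1 b(e,P_0\psi)+(\lambda_1-\lambda_{1,0})b(u_{1,0},P_0\psi)$. The second piece is indeed higher order, but the first is not: up to higher-order corrections $\lambda_1 b(e,P_0\psi)\approx\lambda_1 b(e,\psi)=\lambda_1\|\psi\|_A^2$, and since $\|\psi\|_A\le\lambda_1^{-1/2}\|e\|_0$ with equality attained when $e$ is parallel to $u_1$, this term can be as large as $\|e\|_0^2$ itself. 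So it cannot be ``absorbed using the first step and the boundedness of $\lambda_{1,0}$'' --- the resulting inequality is vacuous. To close the argument you need one of two additional ingredients that your sketch omits: either (i) the Babu\v{s}ka--Osborn route, proving $\|(T-T_0)f\|_0\le\eta_A(V_0)\|(T-T_0)f\|_A$ for the source-problem solution operators (where genuine Galerkin orthogonality does hold) and transferring this to eigenfunctions through the spectral projections; or (ii) the observation that under the sign-compatible normalization $b(e,u_1)=\tfrac12\|e\|_0^2$ is itself higher order, so that decomposing $e$ into its $u_1$-component and its $b$-orthogonal complement yields $\lambda_1\|\psi\|_A^2\le(\lambda_1/\lambda_2)\|e\|_0^2+O(\|e\|_0^4)$, which can then be absorbed because $\lambda_1/\lambda_2<1$. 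Either way, the spectral gap enters the $L^2$ estimate in an essential manner, not only in the energy estimate as your write-up suggests.
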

Combining this lemma with \eqref{lambdaueq2} yields
\begin{equation}\label{lambdajto1l}
    \lambda^{j} -\lambda_{1,L} < \lambda_{1,0}-\lambda_1 \leq C \delta_0^2(\lambda_1),
\end{equation}
where $ \delta_0(\lambda_1) \to 0$ as $h_0 \to 0.$ Then similar to Lemma 4.3 in \cite{wang2019convergence}, we have
\begin{lemma}\label{q12}
 { It holds that }
    \begin{align*}
            \ \|Q_1^L Q_2^0 v_0\|_0 &\leq C \delta_{0}(\lambda_1)\eta_A \left(V_0\right)\|Q_2^0 v_0\|_0 \ \ \ \ \forall\ v_0\in V_0      
    \end{align*}
    and
            \begin{equation}\notag 
            \|Q_2^0 Q_0 Q_1^L v_L \|_0 \leq C \delta_{0}(\lambda_1) \eta_A \left(V_0\right) \| Q_1^L v_L\|_0\ \ \ \ \forall\ v_L\in V_L.
      \end{equation}
 \end{lemma}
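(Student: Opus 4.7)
The plan is to reduce both inequalities to a single a priori estimate,
\[
\|Q_2^0 Q_0 u_{1,L}\|_0 \leq C\,\delta_0(\lambda_1)\,\eta_A(V_0),
\]
after which everything is just linear algebra of $b$-orthogonal projectors. I normalize the principal discrete eigenfunctions so that $\|u_{1,L}\|_0=\|u_{1,0}\|_0=1$; then $Q_1^L v = b(v,u_{1,L})u_{1,L}$ for every $v\in V_L$.

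For the first inequality, set $w:=Q_2^0 v_0 \in U_2^0 \subset V_0$, so $\|Q_1^L w\|_0 = |b(w,u_{1,L})|$. Two successive reductions bring the right-hand side to the key quantity. First, because $w\in V_0$, the defining property of the $L^2$-projector $Q_0$ yields $b(w,u_{1,L})=b(w,Q_0 u_{1,L})$. Second, decomposing $Q_0 u_{1,L} = Q_1^0 Q_0 u_{1,L} + Q_2^0 Q_0 u_{1,L}$ via \eqref{directsumonL} on $V_0$ and using that $w\in U_2^0$ is $b$-orthogonal to $U_1^0$, only the $U_2^0$-component survives, giving $|b(w,u_{1,L})| \leq \|w\|_0\,\|Q_2^0 Q_0 u_{1,L}\|_0$. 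For the second inequality, write $Q_1^L v_L = c\,u_{1,L}$ with $|c|=\|Q_1^L v_L\|_0$; linearity immediately yields $\|Q_2^0 Q_0 Q_1^L v_L\|_0 = \|Q_1^L v_L\|_0 \, \|Q_2^0 Q_0 u_{1,L}\|_0$. Both statements thus follow once the key estimate is established.

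The key estimate is obtained by inserting $u_{1,0}$: since $u_{1,0}\in V_0$ gives $Q_0 u_{1,0}=u_{1,0}$ and $u_{1,0}\in U_1^0$ gives $Q_2^0 u_{1,0}=0$, we have $Q_2^0 Q_0 u_{1,L} = Q_2^0 Q_0 (u_{1,L}-u_{1,0})$. Because $Q_0$ and $Q_2^0$ are $L^2$-contractions, $\|Q_2^0 Q_0 u_{1,L}\|_0 \leq \|u_{1,L}-u_{1,0}\|_0$. Splitting $u_{1,L}-u_{1,0} = (u_{1,L}-u_1)+(u_1-u_{1,0})$ and applying Lemma \ref{ukulemma} on both $V_0$ and $V_L$, together with the monotonicities $\delta_L(\lambda_1)\leq\delta_0(\lambda_1)$ and $\eta_A(V_L)\leq\eta_A(V_0)$ implied by $V_0\subset V_L$, delivers $\|u_{1,L}-u_{1,0}\|_0 \leq C\,\delta_0(\lambda_1)\,\eta_A(V_0)$. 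The only subtle point is the implicit choice of signs of $u_{1,0}$ and $u_{1,L}$ so that Lemma \ref{ukulemma} applies simultaneously to the pairs $(u_1,u_{1,0})$ and $(u_1,u_{1,L})$; this is the standard convention for a simple principal eigenvalue and is tacit throughout the paper. No other obstacle is expected, since the entire argument rests only on the $L^2$-projection identity, the $b$-orthogonality in \eqref{directsumonL}, and the pre-existing a priori bounds of Lemma \ref{ukulemma}.
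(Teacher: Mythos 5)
Your proof is correct. The paper does not actually spell out a proof for this lemma---it only remarks that it follows ``similar to Lemma 4.3 in \cite{wang2019convergence}''---so there is no internal argument to compare against; but your argument is self-contained, and the structure (reduce both claims to the error between the coarse and fine principal eigenvectors, then invoke the a priori estimates) is almost certainly the intended one. Your reduction of both inequalities to the single quantity $\|Q_2^0 Q_0 u_{1,L}\|_0$ is clean: for the first you exploit that $Q_1^L$ has rank one and pass $u_{1,L}$ through $Q_0$ and the $V_0 = U_1^0 \oplus U_2^0$ splitting; for the second the equality $\|Q_2^0 Q_0 Q_1^L v_L\|_0 = \|Q_1^L v_L\|_0\,\|Q_2^0 Q_0 u_{1,L}\|_0$ is exact. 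The key estimate then follows from inserting $u_{1,0}$ (which is annihilated by $Q_2^0 Q_0$), the contractivity of $L^2$-projectors, the triangle inequality through $u_1$, and Lemma~\ref{ukulemma}.

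Two small points worth making explicit if you write this up. First, Lemma~\ref{ukulemma} is stated in the paper only for the coarsest space $V_0$; your bound on $\|u_1 - u_{1,L}\|_0$ needs the analogous statement on $V_L$. That version does hold (the cited references \cite{babuvska1989finite,MR2652780} give it for any conforming space), and you correctly use the nesting $V_0\subset V_L$ to dominate $\delta_L(\lambda_1)\eta_A(V_L)$ by $\delta_0(\lambda_1)\eta_A(V_0)$, but the reader should be told that a general-level form of the lemma is being invoked. Second, your remark about the sign normalization of $u_{1,0}$ and $u_{1,L}$ relative to $u_1$ is exactly the right caveat: the direct-sum decomposition \eqref{directsumonL} is insensitive to the sign of the eigenvector, while the a priori bounds in Lemma~\ref{ukulemma} presuppose the aligned choice, so one must fix the signs consistently before using both in the same chain of inequalities.
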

\par Clearly, the terms $\|Q_2^L Q_1^0 v_0\|_0$ and $\|Q_1^0 Q_0 Q_2^L v_L\|_0$ have similar estimates. Next, we state two important properties of the abstract Schwarz theory: The stability of decomposition for the error subspace $U_{2}^{L}$ and the strengthened Cauchy-Schwarz inequality for local subspaces $V_{l, i},\ l=0,1,..., L, i=1,2,...,\widetilde{m}_{l}$.
It should be noted that Lemmas \ref{Stability}, \ref{strength} hold for sufficiently small $h_0$.
\begin{lemma}[The stability of decomposition]\label{Stability}
For any $ v \in U_2^L$, there exists $v_0 \in V_0 , v_{l,i}\in V_{l,i}$ such that
$$ v=Q_2^L v_0 + Q_2^L \sum_{l=1}^L \sum_{i=1}^{\widetilde{m}_l} v_{l,i},$$
and
\begin{equation}\label{stableequation}
    (v_0,v_0)_{E^{j}} + \sum_{l=1}^L \sum_{i=1}^{\widetilde{m}_l}(v_{l,i},v_{l,i})_{E^{j}} \leq C C_h^{\rho} (v,v)_{E^{j}},
\end{equation}
where $ C_h^{\rho}= \min \{ {|{\rm log} h_{\min}|^2, \rho_{\max}}\}\left(\rho_{\max}=\max_{x \in \Omega} \rho(x)\right)$.
\end{lemma}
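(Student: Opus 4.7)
The plan is to reduce the stability estimate in the shifted bilinear form $(\cdot,\cdot)_{E^{j}}$ to a classical stable local multilevel decomposition for the weighted energy inner product $a_{\rho}(\cdot,\cdot)$, and then transfer the bound through the norm equivalences already established in this section. The key observation is that on each local subspace $V_{0}$ and $V_{l,i}$ the shifted inner product is dominated termwise by $a_{\rho}$ (since $\lambda^{j}\|w\|_{0}^{2}\geq 0$), while on the error subspace $U_{2}^{L}$ the $A$-inner product is controlled from above by $(\cdot,\cdot)_{E^{j}}$ through the constant $\beta(\lambda_{2,L})$ appearing in \eqref{atoej}.

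First I would regard $v\in U_{2}^{L}$ as an element of $V_{L}$ and invoke the stable decomposition theory for local multilevel methods on newest-vertex-bisection meshes with jumping coefficients, producing $v_{0}\in V_{0}$ and $v_{l,i}\in V_{l,i}$ with $v=v_{0}+\sum_{l=1}^{L}\sum_{i=1}^{\widetilde{m}_{l}} v_{l,i}$ and
\begin{equation*}
a_{\rho}(v_{0},v_{0})+\sum_{l=1}^{L}\sum_{i=1}^{\widetilde{m}_{l}} a_{\rho}(v_{l,i},v_{l,i})\leq CC_{h}^{\rho}\,a_{\rho}(v,v),
\end{equation*}
where only the nodes of $\widetilde{\mathcal{M}}_{l}$ participate, as in the adaptive local pattern of \cite{xu2010optimality}, and the robust BPX-type analysis supplies the factor $C_{h}^{\rho}=\min\{|\log h_{\min}|^{2},\rho_{\max}\}$. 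Since $v\in U_{2}^{L}$ forces $Q_{2}^{L}v=v$, applying $Q_{2}^{L}$ to the decomposition instantly yields the required representation $v=Q_{2}^{L}v_{0}+Q_{2}^{L}\sum_{l,i}v_{l,i}$. Finally, using $\|w\|_{E^{j}}^{2}\leq a_{\rho}(w,w)$ termwise on the left and $a_{\rho}(v,v)\leq \beta(\lambda_{2,L})\|v\|_{E^{j}}^{2}$ on the right produces \eqref{stableequation}, with the bounded factor $\beta(\lambda_{2,L})$ absorbed into $C$; the uniform bound $\lambda^{j}\leq\lambda_{1,0}$ from \eqref{lambdaueq2} guarantees that $\beta(\lambda_{2,L})$ is indeed a mesh-independent constant.

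The main obstacle is supplying the coefficient-robust, locally patterned $A$-norm decomposition used in Step 1. This is not an off-the-shelf statement: one must simultaneously handle that smoothing occurs only on the new and modified nodes $\widetilde{\mathcal{M}}_{l}$ rather than on all interior nodes, which requires the geometric structure of newest-vertex bisection to prevent basis-function supports from being revisited redundantly, and the discontinuous coefficient $\rho(x)$, which forces either a quasi-monotone construction yielding the $|\log h_{\min}|^{2}$ factor or a crude bound yielding the alternative $\rho_{\max}$ factor. Once this $A$-norm building block is assembled, the shift $\lambda^{j}$ and the projector $Q_{2}^{L}$ cause no further difficulty, because the shift is absorbed through \eqref{atoej} and the decomposition of $v\in U_{2}^{L}$ is preserved under $Q_{2}^{L}$ without changing any of the summands' $E^{j}$-norms on the left-hand side.
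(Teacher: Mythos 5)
Your proposal is correct and follows essentially the same route as the paper's proof: the paper explicitly takes $v_0=\pi_0 v$, $v_{l,i}=v_l(\boldsymbol{x}_{l,i})\phi_{l,i}$ with $v_l=(\pi_l-\pi_{l-1})v$ for the local quasi-interpolation operators of Xu--Chen--Nochetto, cites that reference for the $a_\rho$-norm stability with the factor $C_h^\rho$ on the adaptive node set $\widetilde{\mathcal{M}}_l$, and then passes to $(\cdot,\cdot)_{E^{j}}$ exactly as you do, via the termwise bound $\|\cdot\|_{E^{j}}^2\le a_\rho(\cdot,\cdot)$ on the left and \eqref{atoej} on the right, with $Q_2^L v=v$ giving the stated representation.
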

\begin{proof}
    Let $v_0 = \pi_0 v,$ $v_{l,i}=v_l(\boldsymbol{x}_{l,i})\phi_{l,i},$
    where $v_l = (\pi_l -\pi_{l-1})v$ and $\pi_l\,(l=0,1,\cdots,L)$ is the local quasi-interpolation operator defined in \cite{MR2928972}. It is easy to see that the first equality holds. 
    \par By the property of $\pi_{0}$, we have 
    \begin{equation}\label{pi0v}
        (v_0,v_0)_{E^{j}}\leq a_\rho (v_0,v_0) =\|v_0\|_A^2 \leq C \tilde{C}_h^{\rho} \|v\|_A^2,
    \end{equation}
    where $\tilde{C}_h^{\rho}=\min \{| \text{log} h_{min}|, \rho_{\max}\} $ and $h_{\min}$ is the minimum diameter of the triangles on the finest mesh. Further, it is proved in \cite{MR2928972} that $\sum_{l=1}^L \sum_{i=1}^{\widetilde{m}_l}\|v_{l,i}\|_A^2 \leq C C_h^{\rho} \|v\|_A^2.$ Combining this with \eqref{atoej} and \eqref{pi0v}, we get
     $$(v_0,v_0)_{E^{j}}+\sum_{l=1}^L \sum_{i=1}^{\widetilde{m}_l}(v_{l,i},v_{l,i})_{E^{j}}\leq \sum_{l=1}^L \sum_{i=1}^{\widetilde{m}_l}\|v_{l,i}\|_A^2 +C\|v\|_A^2 \leq C \|v\|_A^2 \leq C \beta(\lambda_{2,L}) C_h^{\rho} \|v\|_{E^{j}}^2,$$ which completes the proof.  
\end{proof}
\begin{lemma}[Strengthened Cauchy-Schwarz inequality]\label{strength} For any $v_{l,i}$, $w_{l,i} \in V_{l,i}$, it holds that
    $$
\sum_{l=0}^L \sum_{i=1}^{\widetilde{m}_l} \sum_{k=0}^{l-1} \sum_{s=1}^{\widetilde{m}_k}  \left(v_{l,i}, w_{k,s}\right)_{E^{j}} \leq C  \left(\sum_{l=0}^L \sum_{i=1}^{\widetilde{m}_l}\left\|v_{l,i}\right\|_{E^{j}}^2\right)^{\frac{1}{2}}\left(\sum_{l=0}^L \sum_{i=1}^{\widetilde{m}_l}\left\|w_{l,i}\right\|_{E^{j}}^2\right)^{\frac{1}{2}}.
$$
\end{lemma}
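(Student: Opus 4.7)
The strategy is to reduce the inequality to the classical strengthened Cauchy--Schwarz result for the bilinear form $a_\rho$ and then transfer it to $(\cdot,\cdot)_{E^{j}}$ using the equivalence $\|\cdot\|_{E^{j}} \sim \|\cdot\|_A$ on each local subspace $V_{l,i}$ (a consequence of \eqref{localeigenvalue} for sufficiently small $h_0$, already recorded in the paragraph following \eqref{atoej}). Writing
\[
(v_{l,i}, w_{k,s})_{E^{j}} = a_\rho(v_{l,i}, w_{k,s}) - \lambda^{j}\, b(v_{l,i}, w_{k,s}),
\]
and applying the triangle inequality, it suffices to bound the $a_\rho$ cross-term sum and the $\lambda^{j} b$ cross-term sum separately by the right-hand side with $A$-norms, then invoke norm equivalence.

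For the $a_\rho$ part, I would invoke directly the local-multilevel strengthened Cauchy--Schwarz inequality established in \cite{MR2928972} (the same reference already used in the proof of Lemma \ref{Stability}). Its proof proceeds via a geometric decay estimate of the form $|a_\rho(v_{l,i}, w_{k,s})| \leq C\gamma^{l-k}\|v_{l,i}\|_A\|w_{k,s}\|_A$ for some $\gamma<1$, combined with the bounded overlap of nodal supports at each level and Cauchy--Schwarz in the summation indices. For the $\lambda^{j} b$ part, since $b(v_{l,i},w_{k,s})=0$ unless $\Omega_{l,i}\cap\Omega_{k,s}\neq\emptyset$, the local Poincar\'e-type bound $\|v_{l,i}\|_0 \leq C h_{l,i}\|v_{l,i}\|_A$ (which follows from $\rho\geq 1$ together with \eqref{localeigenvalue}) yields
\[
\lambda^{j}\,|b(v_{l,i}, w_{k,s})| \leq C\lambda^{j} h_{l,i} h_{k,s} \|v_{l,i}\|_A \|w_{k,s}\|_A.
\]
Together with the uniform bound $\lambda^{j}\leq \lambda_{1,0}$ from \eqref{lambdaueq2}, the bounded overlap of local supports at each level, and a geometric series argument in $l-k$, this produces a bound of the same form as for the $a_\rho$ part.

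The main obstacle I anticipate is ensuring the constant is independent of the coefficient jumps in $\rho$. Since $\|\cdot\|_A$ carries $\rho$ while $b(\cdot,\cdot)$ does not, a naive Poincar\'e argument might introduce an unwanted $\rho^{-1/2}$ factor in the $b$ estimate; fortunately $\rho\geq 1$ makes such a factor harmless, and the uniform bound $\lambda_{\min}(A_{l,i})\geq O(h_{l,i}^{-2})$ in \eqref{localeigenvalue} holds with a $\rho$-independent constant. Combining the two bounds and transferring from $\|\cdot\|_A$ back to $\|\cdot\|_{E^{j}}$ via the equivalence on each $V_{l,i}$ yields the claimed inequality.
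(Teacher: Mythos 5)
Your proposal is correct, and it reaches the same conclusion by a route that differs in presentation but not in substance from what the paper intends. The paper's own proof is a one-liner: it remarks that $(\cdot,\cdot)_{E^{j}}$ is an inner product on each element once $h_0$ is small (by \eqref{localeigenvalue}) and then says "the proof follows a similar technique as Theorem~5.1 in \cite{MR2928972}," leaving the details to that reference. The most natural way to make "similar technique" rigorous is to rework the element-wise geometric-decay argument of \cite{MR2928972} directly with $a_\rho$ replaced by $(\cdot,\cdot)_{E^{j}}$; you instead split $(\cdot,\cdot)_{E^{j}} = a_\rho - \lambda^{j} b$, treat the $a_\rho$-cross-terms by citing the reference as a black box, treat the $b$-cross-terms by the local inverse bound $\|v_{l,i}\|_0 \le C h_{l,i}\|v_{l,i}\|_A$ (constant independent of $\rho$ since $\rho\ge 1$), and transfer to $\|\cdot\|_{E^{j}}$ by the local norm equivalence. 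That is a clean and legitimate alternative, and your observation that the $b$-part is actually easier (the extra $h_{l,i}h_{k,s}$ factor gives even stronger decay) is correct. The only thing you should state explicitly, because it is what makes the geometric-series step go through for \emph{locally} refined meshes, is that for $\boldsymbol{x}_{l,i}\in\widetilde{\mathcal{M}}_l$ the support diameter $h_{l,i}$ does decay geometrically in $l$ under newest-vertex bisection (these are precisely the nodes created or whose support shrank at step $l$), together with bounded overlap of the local supports across levels; both facts are already used in \cite{MR2928972} and would need to be cited alongside the SCS result.
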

\begin{proof}
By \eqref{localeigenvalue}, we may choose sufficiently small $h_0$ to ensure that $(\cdot,\cdot)_{E^{j}}$ is an inner product in each element.  Then the proof follows a similar technique as Theorem 5.1 in \cite{MR2928972}.
\end{proof}
\par Define the operator $K_{l,i}^{j}:V_L \to V_{l,i}$ as follows:
\begin{equation}\label{kli}
    (K_{l,i}^{j} v_L ,w_{l,i})_{E^{j}}=(v_L,w_{l,i})_{E^{j}}\ \ \ \ \forall\ v_L \in V_L , \ w_{l,i} \in V_{l,i},
\end{equation}
where $l=1,2,\cdots,L ,i=1,2,\cdots,\widetilde{m}_l.$
 Based on the Lax-Milgram theorem and \eqref{localeigenvalue}, the operator $K_{l,i}^{j}$ is well-defined for sufficiently small $h_0$, and
 \begin{equation}\label{qaak}
   Q_{l,i} \tilde{A}_{L}^{j} = \tilde{A}_{l,i}^{j} K_{l,i}^{j},
 \end{equation}
which, together with \eqref{jacobismoother}, yields
\begin{equation}\label{kljmini}
 K_l^{j}=\gamma \sum_{{i}=1}^{\widetilde{m}_l} K_{l,i}^{j}, 
\end{equation}
where $K_l^{j}=R_l^{j} Q_l \tilde{A}_L^{j}$. Since $R_l^{j}$ is symmetric with respect to $b(\cdot, \cdot)$, we get
\begin{equation}\label{kljsymmetry}
    (K_l^{j} v_L,w_L)_{E^{j}}=(v_L,K_l^{j} w_L)_{E^{j}}\ \ \ \ \forall\ v_L,w_L \in V_L.
\end{equation}
On the coarsest level, define $K_0^{j}:=R_0^{j} Q_0 \tilde{A}_L^{j},$ then we have
   \begin{equation}\label{k0j}
       (K_0^{j} v_L,\omega_{2}^0)_{E^{j}}=(v_L,\omega_{2}^0)_{E^{j}}\ \ \ \ \forall\ v_L \in V_L,\ \omega_{2}^0 \in U_2^0,
   \end{equation}
which yields that \eqref{kljsymmetry} also holds for $l=0.$
\par Based on Lemma \ref{Stability}, Lemma \ref{strength} and above properties, we may obtain the following crucial lemma for the theoretical analysis.
\begin{lemma}\label{Kjlowerbound}
    Let $K^{j}=\sum_{l=0}^L K_l^{j}.$ Then for sufficiently small $h_0$,
    \begin{equation}\label{kjlower1}
    \|w\|_{E^{j}}^2 \leq (K^{j} w,w)_{E^{j}} \ \ \ \ \forall\ w \in  U_2^0,
    \end{equation}
    and 
    \begin{equation}\label{kjlower2}
         \|v\|_{E^{j}}^2 \leq \frac{C C_h^ {\rho} }{\left(1-C\tilde{C}_h^{\rho}\delta_{0}(\lambda_1)  \eta_A (V_0)\right)^2}(K^{j} v,v)_{E^{j}} \ \ \ \ \forall\ v\in U_2^L.
    \end{equation}
\end{lemma}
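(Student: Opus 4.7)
My plan is to prove (\ref{kjlower1}) directly from the defining identities (\ref{k0j}) and (\ref{kli}), and to prove (\ref{kjlower2}) by the standard abstract Schwarz lower-bound scheme plus an extra argument that controls the coarse-level pollution. For (\ref{kjlower1}), when $w\in U_2^0$ plugging $\omega_2^0=w$ into (\ref{k0j}) yields $(K_0^{j}w,w)_{E^{j}}=\|w\|_{E^{j}}^2$; for $l\ge 1$ and each $i$, testing (\ref{kli}) with $w_{l,i}=K_{l,i}^{j}w$ and using symmetry gives $(K_{l,i}^{j}w,w)_{E^{j}}=\|K_{l,i}^{j}w\|_{E^{j}}^2\ge 0$, so summing via (\ref{kljmini}) produces (\ref{kjlower1}).

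For (\ref{kjlower2}), first apply Lemma \ref{Stability} to decompose $v=Q_2^L v_0+Q_2^L\sum_{l,i}v_{l,i}$ with the stability bound (\ref{stableequation}). Because $v\in U_2^L$ and $U_1^L$ is both $b(\cdot,\cdot)$- and $a_\rho(\cdot,\cdot)$-orthogonal to $U_2^L$, we have $(v,Q_2^L w)_{E^{j}}=(v,w)_{E^{j}}$ for every $w\in V_L$, so the outer $Q_2^L$'s can be dropped inside the inner products. Splitting $v_0=Q_1^0 v_0+Q_2^0 v_0$ and applying (\ref{k0j}) and (\ref{kli}) yields the key decomposition
\begin{equation*}
\|v\|_{E^{j}}^2=(v,Q_1^0 v_0)_{E^{j}}+(K_0^{j}v,Q_2^0 v_0)_{E^{j}}+\sum_{l=1}^L\sum_{i=1}^{\widetilde{m}_l}(K_{l,i}^{j}v,v_{l,i})_{E^{j}}.
\end{equation*}
The last two groups are handled by the familiar Schwarz trick: using $\|K_0^{j}v\|_{E^{j}}^2=(K_0^{j}v,v)_{E^{j}}$ and $\|K_{l,i}^{j}v\|_{E^{j}}^2=(K_{l,i}^{j}v,v)_{E^{j}}$, a double application of Cauchy-Schwarz combined with (\ref{kljmini}) and the stability bound (\ref{stableequation}) gives
\begin{equation*}
(K_0^{j}v,Q_2^0 v_0)_{E^{j}}+\sum_{l,i}(K_{l,i}^{j}v,v_{l,i})_{E^{j}}\le C\sqrt{C_h^{\rho}(K^{j}v,v)_{E^{j}}}\,\|v\|_{E^{j}}.
\end{equation*}

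The main obstacle is the leftover pollution term $(v,Q_1^0 v_0)_{E^{j}}$, which is not controlled by any $K_l^{j}$ because $K_0^{j}v\in U_2^0$ is $(\cdot,\cdot)_{E^{j}}$-orthogonal to $U_1^0$. To bound it, I would exploit the coarse eigenvalue identity: since $Q_1^0 v_0\in\text{span}\{u_{1,0}\}$, one has $(v,Q_1^0 v_0)_{E^{j}}=(\lambda_{1,0}-\lambda^{j})b(v,Q_1^0 v_0)$, and using $Q_1^L v=0$ we may rewrite $b(v,Q_1^0 v_0)=b(v,Q_2^L Q_1^0 v_0)$. Combining the $L^2$ estimate $\|Q_2^L Q_1^0 v_0\|_0\le C\delta_{0}(\lambda_1)\eta_A(V_0)\|Q_1^0 v_0\|_0$ (from the remark following Lemma \ref{q12}), the eigenvalue closeness (\ref{lambdajto1l}), the spectral gap $\|v\|_0\le\lambda_{2,L}^{-1/2}\|v\|_A$ valid for $v\in U_2^L$, the interpolation bound (\ref{pi0v}), and the norm equivalence (\ref{atoej}) collapses these factors to $|(v,Q_1^0 v_0)_{E^{j}}|\le C\tilde{C}_h^{\rho}\delta_{0}(\lambda_1)\eta_A(V_0)\|v\|_{E^{j}}^2$. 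Absorbing this term on the left, dividing by $\|v\|_{E^{j}}$, and squaring yields (\ref{kjlower2}); the required positivity of $1-C\tilde{C}_h^{\rho}\delta_{0}(\lambda_1)\eta_A(V_0)$ is automatic for sufficiently small $h_0$ because $\delta_{0}(\lambda_1)\to 0$ as $h_0\to 0$.
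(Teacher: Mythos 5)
Your argument for \eqref{kjlower1} is correct, and your overall blueprint for \eqref{kjlower2} -- the stability decomposition from Lemma~\ref{Stability}, dropping the outer $Q_2^L$, the split $v_0=Q_1^0v_0+Q_2^0v_0$, the Schwarz/Cauchy--Schwarz treatment of the $K_0^j$ and $K_{l,i}^j$ terms, and isolating $(v,Q_1^0v_0)_{E^j}$ as the pollution term -- matches the structure of the paper's proof. The difficulty is in how you then handle that pollution term.

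Your key step there is the identity $(v,Q_1^0 v_0)_{E^j}=(\lambda_{1,0}-\lambda^j)\,b(v,Q_1^0 v_0)$, which you justify by the fact that $Q_1^0 v_0\in\mathrm{span}\{u_{1,0}\}$. This would require $a_\rho(v,u_{1,0})=\lambda_{1,0}\,b(v,u_{1,0})$ for $v\in U_2^L\subset V_L$, but the discrete eigenpair $(\lambda_{1,0},u_{1,0})$ satisfies $a_\rho(u_{1,0},w_0)=\lambda_{1,0}\,b(u_{1,0},w_0)$ only against test functions $w_0\in V_0$; it does not extend to $v\in V_L\setminus V_0$, since $u_{1,0}$ is not an eigenfunction of the level-$L$ discrete operator (nor of the continuous one). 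Consequently your reduction to $b(v,Q_2^L Q_1^0 v_0)$ and the subsequent collapse via Lemma~\ref{q12} do not follow. The paper instead handles $(Q_1^0v_0,v)_{E^j}$ by separating the $a_\rho$-part and the $\lambda^j b$-part and estimating each with Lemma~\ref{q12}, \eqref{atoej} and \eqref{pi0v}, which avoids invoking any eigenvalue relation for $u_{1,0}$ against fine-level test functions. If you want to repair your route, you would need to replace the false identity by writing $a_\rho(v,u_{1,0})=a_\rho(v,u_1)+a_\rho(v,u_{1,0}-u_1)$, use $a_\rho(v,u_1)=\lambda_1 b(v,u_1)$ together with $b(v,u_{1,L})=0$ for $v\in U_2^L$, and bound the remainder by Cauchy--Schwarz and Lemma~\ref{ukulemma}; as written, the step is a genuine gap.
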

\begin{proof}
  First, for all $w \in U_2^0$, the property of $\pi_0$ implies that $\pi_0 w=w$.
   In view of \eqref{kli} and \eqref{k0j}, we get \eqref{kjlower1}. 
Moreover, for all $ v \in U_2^L $, it follows from Lemma \ref{Stability} that
    \begin{equation}\label{vtovl}
        (v,v)_{E^{j}}=\sum_{l=0}^{L}(v_l,v)_{E^{j}},
    \end{equation}
    where $v_0=\pi_0 v$ and $v_l=\sum_{i=1}^{\widetilde{m}_l} v_{l,i}\ (l>0).$
    For the case $l>0$, owing to \eqref{kli}, we have
    \begin{equation*}
     \begin{aligned}
(v_l,v)_{E^{j}}&=\sum_{i=1}^{\widetilde{m}_l}(v_{l,i},v)_{E^{j}}=\sum_{i=1}^{\widetilde{m}_l}(v_{l,i},K_{l,i}^{j} v)_{E^{j}}\leq \sum_{i=1}^{\widetilde{m}_l}(v_{l,i},v_{l,i})_{E^{j}}^{1/2}(K_{l,i}^{j} v,K_{l,i}^{j} v)_{E^{j}}^{1/2}\\
       & \leq \left(\sum_{i=1}^{\widetilde{m}_l}(v_{l,i},v_{l,i})_{E^{j}}\right)^{1/2}\left(\sum_{i=1}^{\widetilde{m}_l}(K_{l,i}^{j} v,K_{l,i}^{j} v)_{E^{j}}\right)^{1/2}.
     \end{aligned}\end{equation*}
For the case $l=0$, from \eqref{directsumonL}, we get
     $(v_0,v)_{E^{j}}=(Q_1^0 v_0,v)_{E^{j}} +(Q_2^0 v_0,v)_{E^{j}}$.
Then using Lemma \ref{q12}, \eqref{atoej} and \eqref{pi0v}, we have
   \begin{equation*}
     \begin{aligned}
       (Q_1^0 v_0,v)_{E^{j}}&=a_\rho( Q_1^0 v_0,Q_1^0 Q_0 v)-\lambda^{j} b(Q_2^L Q_1^0 v_0, v)\\
       &\leq \| Q_1^0 v_0\|_A \|Q_1^0 Q_0 v\|_A+\lambda^{j} \|Q_2^L Q_1^0 v_0\|_0 \| v\|_0\\
       & \leq C \delta_{0}(\lambda_1) \eta_A (V_0) \|Q_1^0 v_0\|_A \|v\|_A+C\lambda^{j} \delta_{0}(\lambda_1)\eta_A\left(V_0\right) \|Q_1^0 v_0\|_0\|v\|_0\\
       & \leq C \tilde{C}_h^{\rho}\delta_{0}(\lambda_1)  \eta_A (V_0)\|v\|_{E^{j}}^2.
     \end{aligned}
   \end{equation*}
Due to \eqref{lambdaueq2}, \eqref{k0j} and the fact that $(Q_1^0 v_0 ,v_0)_{E^{j}}>0$, the following holds:
   \begin{equation}\label{v0v2}
     (Q_2^0 v_0,v)_{E^{j}}=(Q_2^0 v_0,K_0^{j} v)_{E^{j}}\leq( Q_2^0 v_0,v_0)_{E^{j}}^{1/2}(K_0^{j} v,K_0^{j} v)_{E^{j}}^{1/2}\leq (v_0,v_0)_{E^{j}}^{1/2}(K_0^{j} v,v)_{E^{j}}^{1/2}.
   \end{equation}
Finally, combining \eqref{stableequation} and \eqref{vtovl}$\sim$\eqref{v0v2}, we deduce
   \begin{equation*}
       \sum_{l=0}^{L} (v_l,v)_{E^{j}}\leq C\tilde{C}_h^{\rho}\delta_{0}(\lambda_1)\eta_A (V_0) \|v\|_{E^{j}}^2 +C\|v\|_{E^{j}}(K^{j} v,v)_{E^{j}}^{1/2},
   \end{equation*}
  which completes the proof of \eqref{kjlower2}.
\end{proof}
\par In order to analyze the error operator in the next section, we first define $\tilde{t}$ as the solution to the preconditioned equation:
\begin{equation}\notag
  B_L^{j}(A_L-\lambda^{j})\tilde{t}=B_L^{j} r^{j}.
 \end{equation}
Subsequently, from Definition 3.1, we get
 $$  \tilde{t}-\tilde{t}_L=\tilde{t}-B_L^{j} r^{j}=(I-B_L^{j} \tilde{A}_L^{j})\tilde{t},$$
 and
 \begin{equation}\label{blmtoelj}
  I-B_L^{j} \tilde{A}_L^{j}=E_L^{j},
 \end{equation}
 where $E_L^{j}=(I-K_L^{j})\cdots (I-K_1^{j})(I-K_0^{j})$ (\cite{bramble2019multigrid}). Define $\tilde{E}_L^{j}:=(I-K_0^{j})(I-K_1^{j})\cdots(I-K_L^{j}),$ then from \eqref{kljsymmetry}, we may deduce that 
 \begin{equation}\label{blmsymmetry}
    (E_L^{j} v_L , w_L)_{E^{j}} = (v_L, \tilde{E}_L^{j} w_L)_{E^{j}},
 \end{equation}
 where $v_L,\ w_L \in V_L$ and $E_{-1}^{j}=I.$ Further, it follows from $E_l^{j}=(I-K_l^{j})E_{l-1}^{j}$ that
 \begin{equation}\label{eljsum}
      I-E^{j}_L=\sum_{l=0}^L K^{j}_l E^{j}_{l-1}.
 \end{equation}
\par To obtain the uniform convergence rate of our algorithm, we need the following classical inequalities, which play a crucial role in analyzing the convergence of multilevel methods.
 \begin{lemma}\label{kjupperbound}
  Let $K^{j}=\sum_{l=0}^L R_l^{j} Q_l \tilde{A}_L^{j}$. For sufficiently small $h_0$, it holds that
  \begin{equation*}
         (K^{j} v_L,v_L)_{E^{j}}\leq C \sum_{l=0}^L (K_l^{j} E_{l-1}^{j} v_L,E_{l-1}^{j} v_L)_{E^{j}}\ \ \ \quad\quad\ \forall\ v_L \in V_L,
  \end{equation*}
and
  \begin{equation*}
         ({2-\omega})\sum_{l=0}^L (K_l^{j} E_{l-1}^{j} v_L,E_{l-1}^{j} v_L)_{E^{j}}\leq (v_L,v_L)_{E^{j}}-(E^{j}_L v_L,E^{j}_L v_L)_{E^{j}} \ \ \ \ \ \ \ \ \,\forall\ v_L \in V_L.
  \end{equation*}
  where $\omega=\max_{l=0,\cdots, L} w_l $ $(0<\omega_l<2)$, depending on the positive scaling factor of Jacobi smoother and the shape regularity of the meshes.
\end{lemma}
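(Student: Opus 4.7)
My plan is to prove the two inequalities independently, each by the standard subspace-correction algebra specialized to the local multilevel setting. The essential ingredients are the telescoping identity \eqref{eljsum}, the symmetry \eqref{kljsymmetry}, the $(\cdot,\cdot)_{E^{j}}$-projector nature of each $K_{l,i}^{j}$ and of $K_0^{j}$, and the strengthened Cauchy--Schwarz inequality of Lemma~\ref{strength}. The smallness of $h_0$ is used throughout to guarantee that $(\cdot,\cdot)_{E^{j}}$ is a genuine inner product on $V_0$ and on each $V_{l,i}$ (via \eqref{localeigenvalue} and \eqref{lambdaueq2}), so that these local projectors and the semi-inner product $(K_l^{j}\cdot,\cdot)_{E^{j}}$ are well-defined and positive semi-definite.

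\textbf{Second inequality.} I would dispatch this one first, since it is really a per-level energy-dissipation calculation. From $E_l^{j} v_L = (I-K_l^{j}) E_{l-1}^{j} v_L$ and the symmetry of $K_l^{j}$, a direct expansion yields
\begin{equation*}
\|E_{l-1}^{j} v_L\|_{E^{j}}^2 - \|E_l^{j} v_L\|_{E^{j}}^2 = 2(K_l^{j} E_{l-1}^{j} v_L, E_{l-1}^{j} v_L)_{E^{j}} - \|K_l^{j} E_{l-1}^{j} v_L\|_{E^{j}}^2.
\end{equation*}
What is needed is the smoothing bound $\|K_l^{j} w\|_{E^{j}}^2 \leq \omega_l (K_l^{j} w, w)_{E^{j}}$ with $\omega_l\in(0,2)$. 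For $l=0$ this is immediate from \eqref{k0j} (so $K_0^{j}$ is an $(\cdot,\cdot)_{E^{j}}$-orthogonal projector onto $U_2^0$ and $\omega_0=1$). For $l\geq 1$ one uses $K_l^{j}=\gamma\sum_i K_{l,i}^{j}$ together with a bounded-overlap/coloring argument on the supports $\{\Omega_{l,i}\}$ (shape regularity), obtaining $\omega_l\leq \gamma N_c$; choosing $\gamma<2/N_c$ secures $\omega_l<2$, which is precisely why the scaling $\gamma$ appears in \eqref{jacobismoother}. Substituting and telescoping $\sum_l (\|E_{l-1}^{j} v_L\|_{E^{j}}^2 - \|E_l^{j} v_L\|_{E^{j}}^2) = \|v_L\|_{E^{j}}^2 - \|E_L^{j} v_L\|_{E^{j}}^2$ closes the argument.

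\textbf{First inequality.} For the first inequality, fix $l$ and decompose $v_L = E_{l-1}^{j} v_L + r_l$ with $r_l := \sum_{k=0}^{l-1} K_k^{j} E_{k-1}^{j} v_L$ by \eqref{eljsum}. Since $(K_l^{j}\cdot,\cdot)_{E^{j}}$ is a symmetric bilinear form (by \eqref{kljsymmetry}), expanding gives
\begin{equation*}
(K_l^{j} v_L, v_L)_{E^{j}} = (K_l^{j} E_{l-1}^{j} v_L, E_{l-1}^{j} v_L)_{E^{j}} + 2(K_l^{j} E_{l-1}^{j} v_L, r_l)_{E^{j}} + (K_l^{j} r_l, r_l)_{E^{j}}.
\end{equation*}
The cross term is handled by the Cauchy--Schwarz inequality associated with the semi-inner product $(K_l^{j}\cdot,\cdot)_{E^{j}}$ and then absorbed by Young's inequality, so the task reduces to bounding $\sum_l (K_l^{j} r_l, r_l)_{E^{j}}$ by $C\sum_l (K_l^{j} E_{l-1}^{j} v_L, E_{l-1}^{j} v_L)_{E^{j}}$. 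Writing both factors in local coordinates $K_l^{j}=\gamma\sum_i K_{l,i}^{j}$ and $K_k^{j}=\gamma\sum_s K_{k,s}^{j}$ expresses this as a double sum indexed by $(l,i)$ and $(k,s)$ with $k<l$, at which point Lemma~\ref{strength} delivers the $L$-independent bound; the $l=0$ slice is treated separately through \eqref{k0j}, and any coupling between $Q_1^0$ and $Q_2^L$ that appears is controlled by Lemma~\ref{q12}.

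\textbf{Main obstacle.} The heart of the matter, and the step I expect to be hardest, is controlling $\sum_l (K_l^{j} r_l, r_l)_{E^{j}}$ uniformly in $L$: a naive triangle inequality on the sum defining $r_l$ would lose a factor proportional to $L$, so Lemma~\ref{strength} is indispensable, being precisely the adaptive-mesh strengthened Cauchy--Schwarz estimate that turns the double sum over levels into something of the size of $\sum_{l,i}\|K_{l,i}^{j}(\cdot)\|_{E^{j}}^2=\gamma^{-1}\sum_l(K_l^{j}\cdot,\cdot)_{E^{j}}$. A secondary technical point is that the entire proof takes place in the $(\cdot,\cdot)_{E^{j}}$ geometry rather than in $a_\rho(\cdot,\cdot)$, so one must consistently invoke the smallness of $h_0$ through \eqref{localeigenvalue}--\eqref{lambdaueq2} to keep this shifted form positive definite on every $V_0$ and $V_{l,i}$.
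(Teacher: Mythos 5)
Your plan is correct and takes essentially the same route as the paper, which simply cites Theorem 3.2 of \cite{bramble2019multigrid} together with Lemma \ref{strength}. Your sketch fills in the two standard steps of that argument: the per-level smoothing bound $\|K_l^{j}w\|_{E^{j}}^2\leq\omega_l(K_l^{j}w,w)_{E^{j}}$ with a bounded-overlap estimate for $\omega_l$, followed by telescoping, for the energy-dissipation inequality; and the decomposition $v_L=E_{l-1}^{j}v_L+r_l$ via \eqref{eljsum} controlled by the strengthened Cauchy--Schwarz inequality, for the partition inequality.
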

\begin{proof}
Based on Lemma \ref{strength}, we may prove this lemma by using similar techniques as Theorem 3.2 in \cite{bramble2019multigrid}. 
\end{proof} 
\par Moreover, since the theoretical analysis is conducted within the error subspace $U_{2}^L$, we need to bound the $L^2$-norm of the error operator. Based on \eqref{kljmini} and Lemma \ref{kjupperbound}, we may get the following estimate. 
\begin{lemma}\label{klfl1}
    It holds that 
    \begin{equation}\notag
        \sum_{l=1}^L b(w_L, K_l^{j} E_{l-1}^{j} v_L) \leq C \|w_L\|_0 \left((v_L,v_L)_{E^{j}}+(\lambda^{j}-\lambda_{1,L}) \|E_L^{j} v_L\|_0^2\right)^{1/2}\ \ \ \ \forall\ v_L,\ w_L\in V_L.
    \end{equation}
\end{lemma}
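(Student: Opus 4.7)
My plan is to split the sum into its local pieces via $K_l^j = \gamma \sum_{i=1}^{\widetilde{m}_l} K_{l,i}^j$ from \eqref{kljmini}, apply Cauchy-Schwarz first in $L^2$ and then across the double index $(l,i)$, and bound the two resulting factors using a local inverse estimate and Lemma~\ref{kjupperbound}, respectively. Since $K_{l,i}^j E_{l-1}^j v_L$ lies in $V_{l,i}=\text{span}\{\phi_{l,i}\}$, it is supported in $\Omega_{l,i}$, so the $L^2$-Cauchy-Schwarz localizes each term as
\begin{equation*}
b(w_L, K_{l,i}^j E_{l-1}^j v_L) \leq \|w_L\|_{0,\Omega_{l,i}}\,\|K_{l,i}^j E_{l-1}^j v_L\|_0.
\end{equation*}
The inverse inequality on $V_{l,i}$, which follows from \eqref{localeigenvalue} together with the equivalence of $\|\cdot\|_A$ and $\|\cdot\|_{E^j}$ on $V_{l,i}$ established after \eqref{atoej}, then yields $\|K_{l,i}^j E_{l-1}^j v_L\|_0 \leq C\,h_{l,i}\,\|K_{l,i}^j E_{l-1}^j v_L\|_{E^j}$.

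Cauchy-Schwarz over the pairs $(l,i)$ produces
\begin{equation*}
\sum_{l=1}^L b(w_L, K_l^j E_{l-1}^j v_L) \leq C\,\Bigl(\sum_{l,i} h_{l,i}^2 \|w_L\|_{0,\Omega_{l,i}}^2\Bigr)^{1/2}\Bigl(\sum_{l,i} \|K_{l,i}^j E_{l-1}^j v_L\|_{E^j}^2\Bigr)^{1/2}.
\end{equation*}
For the second factor, \eqref{kli} shows that each $K_{l,i}^j$ is the $E^j$-orthogonal projector onto $V_{l,i}$, which gives the identity $\gamma \sum_{i}\|K_{l,i}^j E_{l-1}^j v_L\|_{E^j}^2 = (K_l^j E_{l-1}^j v_L, E_{l-1}^j v_L)_{E^j}$. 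Summing over $l$ and applying Lemma~\ref{kjupperbound} bounds this by $C\bigl[(v_L, v_L)_{E^j} - (E_L^j v_L, E_L^j v_L)_{E^j}\bigr]$. Using the Rayleigh-quotient bound $\|E_L^j v_L\|_A^2 \geq \lambda_{1,L}\|E_L^j v_L\|_0^2$ on $V_L$, I rewrite $-(E_L^j v_L, E_L^j v_L)_{E^j} = -\|E_L^j v_L\|_A^2 + \lambda^j \|E_L^j v_L\|_0^2 \leq (\lambda^j - \lambda_{1,L})\|E_L^j v_L\|_0^2$, which matches exactly the expression inside the square root on the right-hand side.

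The main obstacle is controlling the first factor by $C\|w_L\|_0^2$. By Fubini this reduces to the pointwise estimate $\sum_{l,i:\,x\in\Omega_{l,i}} h_{l,i}^2 \leq C\,h_0^2$ for every $x\in\Omega$, which I would establish by combining the bounded local overlap at each level (from shape regularity) with the geometric decay of $h_{l,i}$ for nodes in $\widetilde{\mathcal{M}}_l$ under newest vertex bisection: the tower of successively refined patches above any fixed point yields a convergent geometric series, with the $h_0^2$ factor absorbed into the generic constant in the final inequality. Combining all the bounds then gives the stated estimate.
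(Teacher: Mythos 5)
Your argument is correct and follows exactly the route the paper indicates for this lemma (which it states without a written proof): localize via $K_l^{j}=\gamma\sum_i K_{l,i}^{j}$ from \eqref{kljmini}, use the local inverse estimate from \eqref{localeigenvalue} and the projector identity $\|K_{l,i}^{j}u\|_{E^{j}}^2=(u,K_{l,i}^{j}u)_{E^{j}}$, control the resulting sum by the second inequality of Lemma \ref{kjupperbound}, and convert $-(E_L^{j}v_L,E_L^{j}v_L)_{E^{j}}$ into $(\lambda^{j}-\lambda_{1,L})\|E_L^{j}v_L\|_0^2$ via the Rayleigh quotient bound. The only step you leave as a sketch is the covering estimate $\sum_{(l,i):\,x\in\Omega_{l,i}}h_{l,i}^2\leq C h_0^2$; this is the standard finite-overlap-per-size-class property of the patches associated with $\widetilde{\mathcal{M}}_l$ under newest vertex bisection (cf. \cite{wu2006uniform,xu2010optimality}), and a fully rigorous version should group the pairs $(l,i)$ by generation rather than rely on level-by-level geometric decay of the patch diameters, which need not hold literally.
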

\par Further, we define $F_l^{j}:=(I-K_l^{j})\cdots(I-K_L^{j})$ and $\tilde{F}_l^{j}:=(I-K_L^{j})\cdots(I-K_l^{j}),\ l=0,1,\cdots,L.$ It is obvious to see that 
\begin{equation}\notag
    F_{l}^{j}=(I-K_{l}^{j})F_{l+1}^{j},
\end{equation}
where $F_{L+1}^{j}=I.$ Then, utilizing the fact that $F_0^{j}=\tilde{E}_L^{j}$, we obtain
\begin{equation*}
   I- \tilde{E}_L^{j}= I-F_0^{j} = \sum_{l=0}^L K_l^{j} F_{l+1}^{j}.
\end{equation*}
In particular, the operator $F_l^{j}$ has similar properties with $E_l^{j}$, as stated in  Lemma \ref{kjupperbound}, Lemma \ref{klfl1}, \eqref{blmsymmetry} and \eqref{eljsum}. We now give a bound for the terms $\|E^{j}_L v_L \|_0$ and $\|\tilde{E}^{j}_L v_L\|_0 $.
\begin{lemma}\label{eljl2upper}
  It holds that 
\begin{equation}\notag
   \|E^{j}_L v_L \|_0 \leq {C}\|v_L\|_{A} \ \ \text{and}\ \ \,\|\tilde{E}^{j}_L v_L\|_0 \leq {C}\|v_L\|_{A}\ \ \ \ \forall\ v_L \in V_L.
\end{equation}
\end{lemma}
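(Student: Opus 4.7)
The plan is to test $\|E_L^j v_L\|_0^2$ against itself through the telescoping identity $I-E_L^j=\sum_{l=0}^L K_l^j E_{l-1}^j$ from \eqref{eljsum}, handle the $l\ge 1$ contributions by Lemma \ref{klfl1}, treat the coarse term $l=0$ directly via \eqref{k0j}, and then absorb a leftover $\|E_L^j v_L\|_0^2$ factor on the right-hand side using the smallness of $\lambda^j-\lambda_{1,L}$ coming from \eqref{lambdajto1l}. The bound for $\tilde E_L^j$ follows by the same scheme applied to the analogous identity $I-\tilde E_L^j=\sum_{l=0}^L K_l^j F_{l+1}^j$, together with the remark before the lemma that $F_{l+1}^j$ satisfies the same properties as $E_{l-1}^j$.

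First I would split
$$\|E_L^j v_L\|_0^2 \;=\; b\bigl(E_L^j v_L,\,v_L\bigr)\;-\;\sum_{l=0}^L b\bigl(E_L^j v_L,\,K_l^j E_{l-1}^j v_L\bigr),$$
and bound the first term by $\|E_L^j v_L\|_0\|v_L\|_0\le C\|E_L^j v_L\|_0\|v_L\|_A$ using Cauchy--Schwarz and Poincar\'e's inequality in $V_L$. For the sum over $l\ge 1$, I would apply Lemma \ref{klfl1} with $w_L=E_L^j v_L$; since $(v_L,v_L)_{E^j}=\|v_L\|_A^2-\lambda^j\|v_L\|_0^2\le \|v_L\|_A^2$ and $\sqrt{a+b}\le\sqrt{a}+\sqrt{b}$, this yields
$$\sum_{l=1}^L b\bigl(E_L^j v_L,\,K_l^j E_{l-1}^j v_L\bigr)\le C\|E_L^j v_L\|_0\bigl(\|v_L\|_A + \sqrt{\lambda^j-\lambda_{1,L}}\,\|E_L^j v_L\|_0\bigr).$$

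For the coarse-level term $l=0$ I would bound $\|K_0^j v_L\|_0$ directly. Since $K_0^j v_L\in U_2^0\subset V_0$ and $\|\cdot\|_{E^j}$ is equivalent to $\|\cdot\|_A$ on $V_0$, testing \eqref{k0j} with $\omega_2^0=K_0^j v_L$ and using Poincar\'e gives $\|K_0^j v_L\|_A^2\le C(v_L,K_0^j v_L)_{E^j}\le C\|v_L\|_A\|K_0^j v_L\|_A$, hence $\|K_0^j v_L\|_0\le C\|K_0^j v_L\|_A\le C\|v_L\|_A$ and therefore $|b(E_L^j v_L,K_0^j v_L)|\le C\|E_L^j v_L\|_0\|v_L\|_A$. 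Putting the three pieces together,
$$\|E_L^j v_L\|_0^2 \;\le\; C\|E_L^j v_L\|_0\|v_L\|_A \;+\; C\sqrt{\lambda^j-\lambda_{1,L}}\,\|E_L^j v_L\|_0^2.$$
By \eqref{lambdajto1l}, $\lambda^j-\lambda_{1,L}\le C\delta_0^2(\lambda_1)$ is arbitrarily small for sufficiently small $h_0$, so the last term can be absorbed into the left-hand side to give $\|E_L^j v_L\|_0\le C\|v_L\|_A$. Running the identical argument for $\tilde E_L^j$ with $F_{l+1}^j$ in place of $E_{l-1}^j$ produces the second estimate.

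The main obstacle is the absorption step: Lemma \ref{klfl1} forces the factor $\|E_L^j v_L\|_0$ to appear on both sides, and closing the estimate relies on the prefactor $\sqrt{\lambda^j-\lambda_{1,L}}$ being small, which in turn requires the coarse-mesh smallness assumption from \eqref{lambdajto1l}. A secondary subtlety is that the coarse-level smoother $R_0^j$ uses the exact inverse $(\tilde A_0^j)^{-1}$ localized to the spectral complement $U_2^0$, so the $l=0$ term falls outside the scope of Lemma \ref{klfl1} and must be handled separately through the identity \eqref{k0j} and the $\|\cdot\|_{E^j}$--$\|\cdot\|_A$ equivalence on $V_0$.
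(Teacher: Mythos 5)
Your proposal is correct and takes essentially the same approach as the paper: decompose $I-E_L^j$ via \eqref{eljsum}, invoke Lemma \ref{klfl1} to control the $l\ge 1$ sum, treat $K_0^j$ separately through \eqref{k0j} and the $\|\cdot\|_{E^{j}}$--$\|\cdot\|_A$ equivalence on $V_0$, and absorb the residual $\|E_L^j v_L\|_0$ term using the smallness of $\lambda^{j}-\lambda_{1,L}$ from \eqref{lambdajto1l}. The only cosmetic difference is that the paper starts from the triangle inequality $\|E_L^j v_L\|_0\le\|v_L\|_0+\|K_0^j v_L\|_0+\|\sum_{l\ge1}K_l^j E_{l-1}^j v_L\|_0$ and tests Lemma \ref{klfl1} with $w_L=\sum_{l\ge1}K_l^j E_{l-1}^j v_L$, whereas you square $\|E_L^j v_L\|_0$ and test with $w_L=E_L^j v_L$; and in the $K_0^j$ estimate the paper splits $v_L=Q_1^L v_L+Q_2^L v_L$ explicitly before applying Cauchy--Schwarz (since $(\cdot,\cdot)_{E^{j}}$ is indefinite on $V_L$), while your sketch reaches the same bound by estimating the $a_\rho$ and $b$ parts separately with Poincar\'e.
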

\begin{proof}
By the triangle inequality, the Poincar\'e inequality and \eqref{eljsum}, we have
\begin{equation}\label{elb}
    \begin{aligned}
        \|E_L^{j} v_L\|_0 &\leq \|v_L\|_0 + \|(I -E_L^{j} )v_L\|_0\\
        &\leq \sqrt{\frac{1}{\lambda_{1,L}}}\|v_L\|_A+ \|K_0^{j} v_L\|_0 +\|\sum_{l=1}^L K_l^{j} E_{l-1}^{j} v_L\|_0.
    \end{aligned}
\end{equation}
Using \eqref{lambdaueq2} and the definition of $K_0^{j}$, we get 
  \begin{equation*}
 (K_0^{j} v_L,K_0^{j} v_L)_{E^{j}} \geq 0,
  \end{equation*}
  which, together with the Poinca\'re inequality, \eqref{directsumonL}, \eqref{atoej} and \eqref{lambdajto1l}, yields
  \begin{equation}\label{k0jv}
  \begin{aligned}
        \|K_0^{j} v_L\|_{E^{j}}^2&=( K_0^{j} v_L,v_L)_{E^{j}}=( K_0^{j} v_L,Q_1^L v_L)_{E^{j}}+( K_0^{j} v_L,Q_2^L v_L)_{E^{j}}\\
           &= b( K_0^{j} v_L,\tilde{A}_L^{j} Q_1^L v_L)+(Q_2^L K_0^{j} v_L,Q_2^L v_L)_{E^{j}}\\
           &\leq C(\lambda^{j}-\lambda_{1,L})\|K_0^{j} v\|_{0} \| Q_1^L v_L\|_{0}+\|Q_2^L K_0^{j} v_L\|_A \|Q_2^L v_L\|_A\\
           & \leq C(1+\delta_0^2(\lambda_1))\|K_0^{j}v_L\|_{E^{j}}\|v_L\|_A. 
  \end{aligned}
  \end{equation}
Then, taking $w_L=\sum_{l=1}^L K_l^{j} E_{l-1}^{j} v_L$ in Lemma \ref{klfl1}, we obtain
\begin{equation}\label{bel}
      b(\sum_{l=1}^L K_l^{j} E_{l-1}^{j} v_L,\sum_{l=1}^L K_l^{j} E_{l-1}^{j} v_L)
     \leq C\gamma^{1/2} \|\sum_{l=1}^L K_l^{j} E_{l-1}^{j} v_L\|_0\left(\|v_L\|_{A}+(\lambda^{j}-\lambda_{1,L})\|E_L^{j} v_L\|_0\right)^{1/2} .
\end{equation}
Finally, substituting \eqref{k0jv} and \eqref{bel} into \eqref{elb}, we get
\begin{equation}\notag
    \|E^{j}_L v_L \|_0 \leq  C \|v_L\|_{A}.
\end{equation}
The second inequality may be proved by using similar techniques. 
\end{proof}

\section{The main convergent result}
\par Based on the previously established lemmas, in this section we shall present the main convergent result of this paper and its rigorous proof.
\begin{theorem}\label{mainresult}
For sufficiently small $h_0$ such that
   $$\theta=\sqrt{1-\frac{(2-\omega)\left( 1-C \tilde{C}_h^{\rho}\delta_{0}(\lambda_1) \eta_A (V_0)\right)^2}{C C_h^\rho}+C \delta^2_0(\lambda_1)} \in (0,1),$$ it holds that
  \begin{equation}\notag
    \|e_L^{j+1}\|_{E^{j}} \leq  \left\{1- \alpha^{j}(1-\theta)+ C\delta_{0}(\lambda_1)\right\}\|e_L^{j}\|_{E^{j}},
  \end{equation}
  and
  \begin{equation}\notag
    \lambda^{j+1}-\lambda_{1,L} \leq \left\{1- \alpha^{j}(1-\theta)+ C\delta_{0}(\lambda_1)\right\}^2  (\lambda^{j}-\lambda_{1,L}),
  \end{equation}
  where $\omega$ is defined in Lemma \ref{kjupperbound}, $\delta_0(\lambda_1)$ and $\eta_A(V_0)$ are defined in Lemma \ref{ukulemma}, $0<\alpha^{j}<\frac{1}{1-\theta},$ $C_h^{\rho}=\min\{ |{\rm log} (h_{\min})|^2, \rho_{\max}\}$ and $\tilde{C}_h^{\rho}=\min\{ |{\rm log} (h_{\min})|, \rho_{\max}\}.$ When $\rho(x)\equiv 1$, $C_h^{\rho}=\tilde{C}_h^{\rho}=1.$
\end{theorem}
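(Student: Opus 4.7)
The plan is to combine the Rayleigh--Ritz optimality from Step 3 of Algorithm 1 with a contraction estimate for the error operator $E_L^{j} = I - B_L^{j}\tilde{A}_L^{j}$ of the multilevel preconditioner on the error subspace $U_2^L$. First I would decompose the current iterate as $u^{j} = c^{j} u_{1,L} + s^{j} w^{j}$ with $w^{j} \in U_2^L$, $\|w^{j}\|_0 = 1$ and $(c^{j})^2 + (s^{j})^2 = 1$, so that the natural error vector $e_L^{j}$ (up to a normalization factor close to 1) lies in $U_2^L$. Since $u^{j}$ and $t_L^{j+1}$ both belong to $W^{j+1}$, the Rayleigh--Ritz minimization in Step 3 allows one to bound $\|e_L^{j+1}\|_{E^{j}}$ by the $E^{j}$-norm of the $U_2^L$-component of any admissible test element of the form $c^{j} u^{j} + \alpha^{j} t_L^{j+1}$.

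Next I would expand $r^{j} = \lambda^{j} u^{j} - A_L u^{j}$ using $A_L u_{1,L} = \lambda_{1,L} u_{1,L}$ to obtain $r^{j} = -\tilde{A}_L^{j}(s^{j} w^{j}) + (\lambda^{j} - \lambda_{1,L}) c^{j} u_{1,L}$. Applying the preconditioner and invoking \eqref{blmtoelj}, this gives $B_L^{j} r^{j} = -(I - E_L^{j}) e_L^{j}$ plus a lower-order contribution of size $(\lambda^{j} - \lambda_{1,L})$, which is controlled by $C\delta_0^2(\lambda_1)$ via \eqref{lambdajto1l}. After absorbing the projector $Q_\perp^{j}$ (whose removal along $u^{j}$ is harmless since $u^{j} \in W^{j+1}$), the admissible element $c^{j} u^{j} + \alpha^{j} t_L^{j+1}$ can be written as $u_{1,L} + ((1-\alpha^{j})I + \alpha^{j} E_L^{j}) e_L^{j}$ plus perturbation terms whose $E^{j}$-norm is bounded by $C\delta_0(\lambda_1)\|e_L^{j}\|_{E^{j}}$ using Lemmas \ref{q12} and \ref{eljl2upper}.

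The central step is to derive the operator contraction $\|E_L^{j} v\|_{E^{j}} \leq \theta_0 \|v\|_{E^{j}}$ for all $v \in U_2^L$, with $\theta_0^2 = 1 - \frac{(2-\omega)(1 - C\tilde{C}_h^\rho \delta_0(\lambda_1)\eta_A(V_0))^2}{CC_h^\rho}$. This follows by combining the two parts of Lemma \ref{kjupperbound}, which together yield $(v,v)_{E^{j}} - (E_L^{j} v, E_L^{j} v)_{E^{j}} \geq \frac{2-\omega}{C}(K^{j} v, v)_{E^{j}}$, with the lower bound \eqref{kjlower2} in Lemma \ref{Kjlowerbound}, which controls $(K^{j} v, v)_{E^{j}}$ from below by $\|v\|_{E^{j}}^2$ times a factor $(1 - C\tilde{C}_h^\rho \delta_0(\lambda_1)\eta_A(V_0))^2/(CC_h^\rho)$. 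Plugging this contraction into the admissible test element and absorbing the $C\delta_0^2(\lambda_1)$ perturbation inside the square root produces $\theta \in (0,1)$, so for $0 < \alpha^{j} < 1/(1-\theta)$ the factor $1 - \alpha^{j}(1-\theta)$ is positive; adding the perturbation $C\delta_0(\lambda_1)\|e_L^{j}\|_{E^{j}}$ recovers the first inequality. The eigenvalue inequality then follows from a Rayleigh-quotient argument: $\lambda^{j+1} - \lambda_{1,L}$ is equivalent to $\|e_L^{j+1}\|_{E^{j}}^2$ (up to constants) via \eqref{atoej} and \eqref{lambdaueq2}, so squaring the first estimate gives the second.

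The main obstacle will be the accurate tracking of the cross-terms between the $U_1^L$-component $c^{j} u_{1,L}$ and the $U_2^L$-component $s^{j} w^{j}$ of the iterate through the preconditioner $B_L^{j}$, particularly because the bilinear form $(\cdot,\cdot)_{E^{j}}$ is indefinite on general $v \in V_L$ and only becomes an inner product on $U_2^L$ and $V_0$. The use of Lemma \ref{eljl2upper} to control $\|E_L^{j} Q_1^L v\|_0$-type terms, together with the $\delta_0(\lambda_1)$-small gap between the continuous and the coarsest discrete eigenvectors provided by Lemma \ref{ukulemma}, is what converts these cross-terms into the \emph{linear} perturbation $C\delta_0(\lambda_1)$ in the reduction factor rather than a constant or a $\theta$-deteriorating term.
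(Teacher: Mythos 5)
Your proposal follows essentially the same route as the paper: the test element $u^{j}+\alpha^{j}t_L^{j+1}$ fed into the Rayleigh--Ritz minimality of Step 3, the splitting of the resulting $U_2^L$-error into a main term governed by the error propagation operator $E_L^{j}=I-B_L^{j}\tilde{A}_L^{j}$ and cross/perturbation terms of size $C\delta_0(\lambda_1)\|e_L^{j}\|_{E^{j}}$, the contraction factor obtained by combining Lemma \ref{Kjlowerbound} with Lemma \ref{kjupperbound} (plus the $C\delta_0^2(\lambda_1)$ correction from Lemma \ref{eljl2upper} when projecting back onto $U_2^L$), and the transfer to the eigenvalue error via the identity $\tilde{\lambda}^{j+1}-\lambda_{1,L}=\|\tilde{e}_L^{j+1}\|_{E^{j}}^2+(\lambda^{j}-\lambda_{1,L})\|\tilde{e}_L^{j+1}\|_0^2$. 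This matches the paper's decomposition into $R_1^{j}$ and $R_2^{j}$ and its Lemmas \ref{gljupper2} and \ref{r2j1lemma}, so the approach is correct and not materially different.
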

\begin{remark}
The coarse mesh size $h_{0}$ is chosen sufficiently small in Theorem \ref{mainresult}
to guarantee that 
    $$\frac{(2-\omega)\left( 1-C \tilde{C}_h^{\rho}\delta_{0}(\lambda_1) \eta_A (V_0)\right)^2}{C C_h^\rho}\in (0,1),$$
which results in $\theta\in (0,1)$. In fact, 
$$\frac{(2-\omega)\left( 1-C \tilde{C}_h^{\rho}\delta_{0}(\lambda_1) \eta_A (V_0)\right)^2}{C C_h^\rho}\to  \frac{(2-\omega)}{C C_h^\rho}   \in (0,1),\ \text{as}\ h_{0}\to 0. $$
    
\end{remark}
\par Before giving the proof of Theorem \ref{mainresult}, we first introduce a special function defined as:
 \begin{equation}\label{ujtilde}
    \tilde{u}^{j+1}:=u^{j}+\alpha^{j} t_L^{j+1} \in U^{j} +\text{span}\{t_L^{j+1}\} \subset W^{j+1},
 \end{equation}
    where $\alpha^{j} $ is an undetermined parameter. Substituting \eqref{jdequation} into \eqref{ujtilde}, we know
    \begin{equation}\label{tildeuj}
\tilde{u}^{j+1}=u^{j}+\alpha^{j} Q_{\bot}^j B_L^{j} r^{j}.
    \end{equation}
Subsequently, we define
\begin{equation}\notag
        e_L^{j}:=-Q_2^L u^{j} \ \ \text{and}\ \ \tilde{e}_L^{j+1}:=-Q_2^L \tilde{u}^{j+1}.
    \end{equation}     
Applying the operator $-Q_2^L$ to both sides of \eqref{tildeuj} yields
    \begin{equation*}
        \tilde{e}_L^{j+1}=e_L^{j}-\alpha^{j} Q_2^L Q_{\perp}^{j}B_L^{j} r^{j}.
      \end{equation*}
 By \eqref{directsumonL} and the fact that $Q_{\perp} ^j = I - Q_{U^{j}}$, we get 
      \begin{equation}
        \label{elj2}
        \begin{aligned}
          \tilde{e}_L^{j+1} &= e_L^{j}-\alpha^{j} Q_2^L B_L^{j} r^{j}+\alpha^{j} Q_2^L Q_{U^{j}} B_L^{j} r^{j}\\
          &= e_L^{j}-\alpha^{j} Q_2^L B_L^{j} (\lambda^{j} I-A_L)(Q_1^L u^{j}-e_L^{j})+\alpha^{j} Q_2^L Q_{U^{j}} B_L^{j} r^{j}\\
          &= \{ e_L^{j}+\alpha^{j} Q_2^L B_L^{j} (\lambda^{j} I-A_L) e_L^{j} \}+\alpha^{j}\{Q_2^L B_L^{j} (A_L-\lambda^{j} I)Q_1^L u^{j}+Q_2^L Q_{U^{j}} B_L^{j} r^{j}\}\\
          &=: R_1^{j}+R_2^{j}.
        \end{aligned}
      \end{equation}
Assume the following estimates of $R_1^{j}$ and $R_2^{j}$ hold:
\par\noindent{\bf (A1)}\ \ $\|R_1^{j}\|_{E^{j}} \leq\left( 1- \alpha^{j} (1- \theta) \right)\|e_L^{j}\|_{E^{j}},$
\par\noindent{\bf (A2)}\ \ $\|R_2^{j}\|_{E^{j}} \leq C\delta_{0}(\lambda_1)\|e_L^{j}\|_{E^{j}},$
\par\noindent where $\theta=\sqrt{1-\frac{(2-\omega)\left(1-C\tilde{C}_h^{\rho}\delta_{0}(\lambda_1)\eta_A(V_0)\right)^2}{C C_h^{\rho}}+C \delta^2_0(\lambda_1)} \in (0,1)$ and $0<\alpha^{j}<\frac{1}{1-\theta}.$ The verifications of (A1) and (A2) shall be presented in the following subsection.

\par\noindent{\bf Proof of Theorem \ref{mainresult}}:\ \
Combining \eqref{elj2}, (A1) and (A2), we deduce
  \begin{equation*}
    \begin{aligned}
    \|\tilde{e}_L^{j+1}\|_{E^{j}} &\leq  \|R_1^{j}\|_{E^{j}} + \|R_2^{j}\|_{E^{j}}\\
      & \leq \left(1-\alpha^{j}(1-\theta)\right)\|e_L^{j}\|_{E^{j}} + C\delta_{0}(\lambda_1) \|  e_L^{j}\|_{E^{j}}\\
      & \leq \left\{1- \alpha^{j}(1-\theta)+C\delta_0(\lambda_1)\right\}\|{e}_L^{j}\|_{E^{j}}.
    \end{aligned}
  \end{equation*}
 Since $\lambda^{j}=Rq(u^{j})$ and $\|u^{j}\|_0=1$, we get
\begin{equation}\label{eljtou1j}
  \|{e}_L^{j}\|_{E^{j}}^2 = (\lambda^{j}-\lambda_{1,L}) \|Q_1^L u^{j}\|_0^2 \leq \lambda^{j}-\lambda_{1,L}.
\end{equation}
Noting that $U^{j} + $span$\{t^{j+1}\} \subset W^{j+1},$ we know $$\lambda^{j+1}\leq \tilde{\lambda}^{j+1}=Rq (\tilde{u}^{j+1}),$$
which, together with  \eqref{eljtou1j}, yields
$$\lambda^{j+1} -\lambda_{1,L}  \leq  {\tilde{\lambda}^{j+1}}- \lambda_{1,L} = \|\tilde{e}_L^{j+1}\|_{E^{j}}^2 + ({\lambda}^{j}-\lambda_{1,L})\|\tilde{e}_L^{j+1}\|_{0}^2.$$
Further, using Lemma \ref{ukulemma}, the Poincar\'e inequality, \eqref{lambdaueq2} and \eqref{atoej}, we have
\begin{equation}\label{lambdajtoej}
   \lambda^{j+1} -\lambda_{1,L} \leq \left(1+C\delta_{0}^2(\lambda_1)\right)\left\{1- \alpha^{j}(1-\theta)+ C\delta_{0}(\lambda_1)\right\}^2 \|{e}_L^{j}\|_{E^{j}}^2. 
\end{equation}
Let $s=1- \alpha^{j}(1-\theta)+ C\delta_{0}(\lambda_1)>0$ and $t=C\delta_{0}(\lambda_1)\left\{1- \alpha^{j}(1-\theta)+ C\delta_{0}(\lambda_1)\right\}>0.$ As $s^2+t^2\leq (s+t)^2$, we get 
\begin{equation}\notag
  \lambda^{j+1}-\lambda_{1,L} \leq \left\{1- \alpha^{j}(1-\theta)+ C\delta_{0}(\lambda_1)\right\}^2  (\lambda^{j}-\lambda_{1,L}).
\end{equation}
 Moreover, by \eqref{eljtou1j} and \eqref{lambdajtoej}, we have $$\|e_L^{j+1}\|_{E^{j}} \leq \sqrt{ {\lambda}^{j+1}- \lambda_{1,L}} \leq \left\{1- \alpha^{j}(1-\theta) + C\delta_{0}(\lambda_1)\right\}\|{e}_L^{j}\|_{E^{j}},$$
which completes the proof of this theorem.\qed

 \subsection{The estimate of $R_1^{j}$}
This subsection gives a proof of (A1). We begin by introducing an operator $G_L^{j}: U_2^L \to U_2^L$, defined as:
 $$G_L^{j}:=I+\alpha^{j} Q_2^L B_L^{j}(\lambda^{j} I-A_L)=I-\alpha^{j} Q_2^L B_L^{j} \tilde{A}_L^{j}.$$ 
Next we provide an estimate for $\|G_L^{j} e_L^{j}\|_{E^{j}}$ in Lemma \ref{gljupper2}. 
\begin{lemma}\label{gljupper2}
  For sufficiently small $h_0$, it holds that
  $$\|G_L^{j} v\|_{E^{j}}\leq\left(1-\alpha^{j} (1-\theta)\right)\|v\|_{E^{j}}\ \ \ \ \forall\ v\in U_{2}^{L},$$
  where $\theta=\sqrt{1-\frac{(2-\omega)(1-C\delta_{0}(\lambda_1))^2}{C C_h^{\rho}}+C \delta^2_0(\lambda_1)}\in (0,1)$ and $0<\alpha^{j}<\frac{1}{1-\theta}.$
\end{lemma}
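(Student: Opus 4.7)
The plan is to reduce the estimate of $\|G_L^j v\|_{E^j}$ to a contraction estimate for the operator $Q_2^L E_L^j$ acting on $U_2^L$, and then combine the abstract Schwarz framework of Lemmas~\ref{Kjlowerbound} and \ref{kjupperbound} with the $E^j$-orthogonal eigenspace splitting $V_L = U_1^L \oplus U_2^L$.

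First, invoking the identity $I - B_L^j \tilde{A}_L^j = E_L^j$ from \eqref{blmtoelj} together with $Q_2^L v = v$ for $v \in U_2^L$, I would rewrite
$$G_L^j v \;=\; v - \alpha^j Q_2^L B_L^j \tilde{A}_L^j v \;=\; (1-\alpha^j)\,v \;+\; \alpha^j\, Q_2^L E_L^j v.$$
The stated inequality then follows from the triangle inequality (applied on $(0,1]$, which is contained in $(0,1/(1-\theta))$ since $\theta>0$) once I establish the key contraction
$$\|Q_2^L E_L^j v\|_{E^j} \;\le\; \theta\,\|v\|_{E^j} \qquad \forall\, v \in U_2^L.$$

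To prove this contraction, I would chain \eqref{kjlower2} of Lemma~\ref{Kjlowerbound} with the two inequalities of Lemma~\ref{kjupperbound} to obtain the standard multilevel estimate
$$(E_L^j v, E_L^j v)_{E^j} \;\le\; \Big(1 - \tfrac{(2-\omega)\bigl(1-C\tilde{C}_h^{\rho}\delta_0(\lambda_1)\eta_A(V_0)\bigr)^2}{C\,C_h^{\rho}}\Big)\,\|v\|_{E^j}^2, \qquad v \in U_2^L.$$
Next I would exploit that both $a_\rho$ and $b$ split orthogonally along $U_1^L \oplus U_2^L$, hence so does $(\cdot,\cdot)_{E^j}$, and that $(Q_1^L w, Q_1^L w)_{E^j} = (\lambda_{1,L}-\lambda^j)\|Q_1^L w\|_0^2 \le 0$ by \eqref{lambdaueq2}. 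Decomposing $E_L^j v = Q_1^L E_L^j v + Q_2^L E_L^j v$ therefore yields
$$\|Q_2^L E_L^j v\|_{E^j}^2 \;=\; (E_L^j v, E_L^j v)_{E^j} \;+\; (\lambda^j-\lambda_{1,L})\,\|Q_1^L E_L^j v\|_0^2.$$
The excess term is then controlled by \eqref{lambdajto1l} ($\lambda^j-\lambda_{1,L} \le C\delta_0^2(\lambda_1)$), by Lemma~\ref{eljl2upper} together with the $L^2$-contractivity of $Q_1^L$ (so that $\|Q_1^L E_L^j v\|_0 \le \|E_L^j v\|_0 \le C\|v\|_A$), and by \eqref{atoej} ($\|v\|_A^2 \le \beta(\lambda_{2,L})\|v\|_{E^j}^2$ on $U_2^L$). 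Putting these together produces the contraction with $\theta$ as declared in the statement.

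The main obstacle is the passage from the multilevel contraction for $E_L^j$ to one for $Q_2^L E_L^j$: since $(\cdot,\cdot)_{E^j}$ is indefinite on $V_L$ (in fact negative on $U_1^L$), the quantity $\|Q_2^L E_L^j v\|_{E^j}^2$ is strictly \emph{larger} than $(E_L^j v, E_L^j v)_{E^j}$, and one must carefully verify that the correction $(\lambda^j-\lambda_{1,L})\|Q_1^L E_L^j v\|_0^2$ is only of lower order $C\delta_0^2(\lambda_1)\|v\|_{E^j}^2$. This is precisely the perturbation absorbed into the formula for $\theta$, and it is what forces $h_0$ to be sufficiently small so that $\theta<1$. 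Once the contraction is in hand, the triangle inequality closes the argument.
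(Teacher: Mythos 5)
Your proposal is correct and follows essentially the same route as the paper: rewrite $G_L^j v = (1-\alpha^j)v + \alpha^j Q_2^L E_L^j v$ and apply the triangle inequality, reduce to the contraction $\|Q_2^L E_L^j v\|_{E^j} \le \theta \|v\|_{E^j}$, obtain the bound on $(E_L^j v, E_L^j v)_{E^j}$ by chaining Lemma~\ref{Kjlowerbound} and Lemma~\ref{kjupperbound}, and control the excess term $(\lambda^j-\lambda_{1,L})\|Q_1^L E_L^j v\|_0^2$ via \eqref{lambdajto1l}, Lemma~\ref{eljl2upper}, the $L^2$-contractivity of $Q_1^L$, and \eqref{atoej}. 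Your remark that the triangle inequality step cleanly requires $\alpha^j\le 1$ (rather than the full stated range $\alpha^j<1/(1-\theta)$) is in fact a slight sharpening of what the paper writes, since the paper's display \eqref{glj1} omits the absolute value $|1-\alpha^j|$.
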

\begin{proof}
  According to the definition of $G_L^{j}$ and \eqref{blmtoelj}, we have
  \begin{equation}\label{glj1}
    \|G_L^{j} v\|_{E^{j}}=\|(I-\alpha^{j} Q_2^L)v +\alpha^{j} Q_2^L E_L^{j} v \|_{E^{j}}\leq (1-\alpha^{j})\|v\|_{E^{j}} + \alpha^{j} \|Q_2^L E_L^{j} v\|_{E^{j}}.
  \end{equation}
Owing to the definition of $(\cdot,\cdot)_{E^{j}}$, \eqref{directsumonL} and \eqref{lambdaueq2}, we deduce 
\begin{equation*}
  \begin{aligned}
    (Q_2^L E_L^{j} v,Q_2^L E_L^{j} v)_{E^{j}}&=(E_L^{j} v, E_L^{j} v)_{E^{j}}-(Q_1^L E_L^{j} v,E_L^{j} v)_{E^{j}}\\
    &=(E_L^{j} v, E_L^{j} v)_{E^{j}}-(\lambda_{1,L}-\lambda^{j})b(Q_1^L E_L^{j} v,E_L^{j} v )\\
    &\leq (E_L^{j} v, E_L^{j} v)_{E^{j}} + (\lambda^{j}-\lambda_{1,L})b(E_L^{j} v, E_L^{j} v).
  \end{aligned}
\end{equation*} 
Further, by Lemma \ref{Kjlowerbound} and Lemma \ref{kjupperbound}, we get
 \begin{equation*}
     ( E_L^{j} v, E_L^{j} v)_{E^{j}} \leq \left\{1-\frac{(2-\omega)\left(1-C\tilde{C}_h^{\rho}\delta_{0}(\lambda_1)\eta_A(V_0)\right)^2}{C C_h^{\rho}}\right\}\|v\|_{E^{j}}^2,
 \end{equation*}                                                                                                      
 which, together with Lemma \ref{eljl2upper} and \eqref{lambdajto1l}, yields
 \begin{equation}\label{q2lelj}
  \|Q_2^L E_L^{j} v\|_{E^{j}} \leq \sqrt{1-\frac{(2-\omega)\left(1-C\tilde{C}_h^{\rho}\delta_{0}(\lambda_1)\eta_A(V_0)\right)^2}{C C_h^{\rho}}+C \delta^2_0(\lambda_1)}\ \|v\|_{E^{j}}.
 \end{equation}
 For brevity, let $\theta=\sqrt{1-\frac{(2-\omega)\left(1-C\tilde{C}_h^{\rho}\delta_{0}(\lambda_1)\eta_A(V_0)\right)^2}{C C_h^{\rho}}+C \delta^2_0(\lambda_1)}.$ 
 Combining \eqref{glj1} and \eqref{q2lelj}, we obtain
 \begin{equation}\notag
  \|G_L^{j} v \|_{E^{j}}\leq \left( 1- \alpha^{j} (1- \theta) \right) \|v\|_{E^{j}},
 \end{equation}
 here we take $0<\alpha^{j}<\frac{1}{1-\theta}$ and choose sufficiently small $h_0$ to satisfy $0<\theta<1$. 
 \qed
\end{proof}

From \eqref{elj2}, we know $R_1^{j}=G_L^{j} e_L^{j}.$ Thus, we obtain the following estimate: 
\begin{equation*}
    \|R_1^{j}\|_{E^{j}} = \|G_L^{j} e_L^{j}\|_{E^{j}} \leq\left( 1- \alpha^{j} (1- \theta) \right)\|e_L^{j}\|_{E^{j}}.
\end{equation*}

\subsection{The estimate of $R_2^{j}$}
\par In this subsection, we provide a rigorous proof of (A2). From \eqref{elj2}, we deduce 
\begin{equation}\notag
\begin{aligned}
       R_2^{j}&=\alpha^{j} Q_2^L B_L^{j} \tilde{A}_L^{j} Q_1^L u^{j}+ \alpha^{j} Q_2^L Q_{U^{j}} B_L^{j} r^{j}\\
       &=: R_{2,1}^{j} + R_{2,2}^{j}.
\end{aligned}
\end{equation}
Next we give the estimates for $R_{2,1}^{j}$ and $R_{2,2}^{j}$ in Lemma \ref{r2j1lemma}. 
\begin{lemma}\label{r2j1lemma}
   It holds that
   \begin{equation}\label{r21final}
   \|R_{2,1}^{j}\|_{E^{j}} \leq C \delta_0(\lambda_1) \|e_L^{j}\|_{E^{j}},
   \end{equation}
   and
   \begin{equation}\label{r22final}
        \|R_{2,2}^{j}\|_{E^{j}} \leq C\delta_0(\lambda_1) \|e_L^{j}\|_{E^{j}}.
   \end{equation}
\end{lemma}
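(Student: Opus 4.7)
The plan is to estimate $R_{2,1}^{j}$ and $R_{2,2}^{j}$ separately, in each case exploiting the eigen-identity $\tilde{A}_L^{j}u_{1,L} = (\lambda_{1,L}-\lambda^{j})u_{1,L}$ to extract a factor of $\sqrt{\lambda^{j}-\lambda_{1,L}}$, which by \eqref{lambdajto1l} is dominated by $C\delta_0(\lambda_1)$ and by \eqref{eljtou1j} combines with $|c|:=|b(u^{j},u_{1,L})|=\|Q_1^{L}u^{j}\|_0$ to produce the desired factor $\|e_L^{j}\|_{E^{j}}$. The central tools are the duality \eqref{blmsymmetry} between $E_L^{j}$ and $\tilde{E}_L^{j}$, the $L^{2}$-bound of Lemma \ref{eljl2upper}, the coarse projection estimate of Lemma \ref{q12}, and the strengthened sum inequality of Lemma \ref{klfl1}.

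For $R_{2,1}^{j}$, I first use $Q_2^{L}Q_1^{L}=0$ and $B_L^{j}\tilde{A}_L^{j}=I-E_L^{j}$ from \eqref{blmtoelj} to rewrite $R_{2,1}^{j} = -\alpha^{j}Q_2^{L}E_L^{j}Q_1^{L}u^{j}$. Setting $\phi := Q_2^{L}E_L^{j}Q_1^{L}u^{j} \in U_2^{L}$, the $E^{j}$-orthogonality of $U_1^{L}$ and $U_2^{L}$ gives $\|\phi\|_{E^{j}}^{2} = (E_L^{j}Q_1^{L}u^{j},\phi)_{E^{j}}$; applying \eqref{blmsymmetry}, then the eigen-identity to pull out $(\lambda_{1,L}-\lambda^{j})$, and finally Cauchy--Schwarz with Lemma \ref{eljl2upper} and the norm equivalence \eqref{atoej} on $U_2^{L}$, yields $\|\phi\|_{E^{j}} \leq C|c|(\lambda^{j}-\lambda_{1,L})$. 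The estimate \eqref{r21final} then follows upon writing $|c|(\lambda^{j}-\lambda_{1,L}) = \sqrt{\lambda^{j}-\lambda_{1,L}}\,\|e_L^{j}\|_{E^{j}}$ via \eqref{eljtou1j} and invoking \eqref{lambdajto1l}.

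For $R_{2,2}^{j}$, since $\|u^{j}\|_0=1$ I have $Q_{U^{j}}w = b(w,u^{j})u^{j}$ and $Q_2^{L}u^{j}=-e_L^{j}$, so $R_{2,2}^{j} = -\alpha^{j}b(B_L^{j}r^{j}, u^{j})\,e_L^{j}$ and it suffices to show $|b(B_L^{j}r^{j},u^{j})| \leq C\delta_0(\lambda_1)$. Expanding $u^{j}=cu_{1,L}-e_L^{j}$ gives $r^{j}=c(\lambda^{j}-\lambda_{1,L})u_{1,L}+\tilde{A}_L^{j}e_L^{j}$ and hence $B_L^{j}r^{j} = -c(I-E_L^{j})u_{1,L}+(I-E_L^{j})e_L^{j}$. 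The $e_L^{j}$-piece is controlled by $\|e_L^{j}\|_0^{2}+|b(E_L^{j}e_L^{j},u^{j})|$, both of order $\delta_0$ by Cauchy--Schwarz, Lemma \ref{eljl2upper}, and the bound $\|e_L^{j}\|_{E^{j}}\leq C\delta_0(\lambda_1)$. The $u_{1,L}$-piece is handled by the telescoping \eqref{eljsum}: $(I-E_L^{j})u_{1,L} = \sum_{l=0}^{L}K_l^{j}E_{l-1}^{j}u_{1,L}$; the $l=0$ term $(\tilde{A}_0^{j})^{-1}Q_2^{0}Q_0u_{1,L}$ has $L^{2}$-norm $O(\delta_0(\lambda_1)\eta_A(V_0))$ by Lemma \ref{q12}, and the $l\geq 1$ sum is estimated via Lemma \ref{klfl1} with $w_L=u^{j}$ and $v_L=u_{1,L}$ to give an $O(\sqrt{\lambda^{j}-\lambda_{1,L}})=O(\delta_0)$ contribution.

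The main obstacle is the $u_{1,L}$-component of $R_{2,2}^{j}$: invoking Lemma \ref{klfl1} with $v_L = u_{1,L}$ is delicate, because $(u_{1,L},u_{1,L})_{E^{j}} = \lambda_{1,L}-\lambda^{j}$ is negative, so the compensating term $(\lambda^{j}-\lambda_{1,L})\|E_L^{j}u_{1,L}\|_0^{2}$ must be retained and paired with the bound $\|E_L^{j}u_{1,L}\|_0 \leq C\|u_{1,L}\|_A = C\sqrt{\lambda_{1,L}}$ from Lemma \ref{eljl2upper} so that the right-hand side of Lemma \ref{klfl1} remains non-negative and scales like $\sqrt{\lambda^{j}-\lambda_{1,L}}$. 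Fusing this with Lemma \ref{q12}, which encodes the approximate $b$-orthogonality of $u_{1,L}$ to $U_2^{0}$ on the coarsest mesh, is the crux of the argument.
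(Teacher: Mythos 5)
Your proof is correct and follows essentially the same route as the paper: both treat $R_{2,1}^{j}$ via $Q_2^L Q_1^L = 0$, the identity $B_L^j\tilde{A}_L^j = I-E_L^j$, the adjoint relation \eqref{blmsymmetry}, Lemma \ref{eljl2upper}, and the eigenvalue factor extracted through \eqref{lambdajto1l} and \eqref{eljtou1j}; and both reduce $R_{2,2}^{j}$ to bounding $|b(B_L^j r^j, u^j)|$, split $u^j$ into its $U_1^L$ and $U_2^L$ components, telescope $I-E_L^j$ via \eqref{eljsum}, and invoke Lemma \ref{klfl1} for the sum over $l\geq 1$. The only noteworthy deviations are minor: for the coarse ($l=0$) contribution you invoke Lemma \ref{q12}, whereas the paper gets the same smallness more directly from $K_0^j Q_1^L u^j = (\lambda^j-\lambda_{1,L})(\tilde{A}_0^j)^{-1}Q_2^0 Q_0 Q_1^L u^j$ alone; and in applying Lemma \ref{klfl1} you plug in $w_L = u^j$ to bound the pairing directly rather than the paper's choice $w_L = \sum_{l\geq 1} K_l^j E_{l-1}^j Q_1^L u^j$ followed by division — both work, and your explicit remark that the compensating $(\lambda^j-\lambda_{1,L})\|E_L^j v_L\|_0^2$ term must be kept to keep the right-hand side of Lemma \ref{klfl1} non-negative is a genuine subtlety that the paper passes over silently.
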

\begin{proof}
      Firstly, using \eqref{atoej}, \eqref{blmtoelj}, \eqref{blmsymmetry} and Lemma \ref{eljl2upper}, we get
  \begin{equation}\notag
  \begin{aligned}
   \|R_{2,1}^{j}\|_{E^{j}}^2 &= (\alpha^{j})^2
     \|Q_2^L B_L^{j} \tilde{A}_L^{j} Q_1^L u^{j}\|_{E^{j}}^2= (\alpha^{j})^2
     \|Q_2^L (I-B_L^{j} \tilde{A}_L^{j}) Q_1^L u^{j}\|_{E^{j}}^2\\
     &=(\alpha^{j})^2 ( E_L^{j} Q_1^L u^{j},Q_2^L E_L^{j} Q_1^L u^{j})_{E^{j}}= (\alpha^{j})^2 b(\tilde{A}_L^{j} Q_1^L u^{j}, \tilde{E}_L^{j} Q_2^L E_L^{j} Q_1^L u^{j})\\
     & \leq C \|\tilde{A}_L^{j} Q_1^L u^{j}\|_0 \|Q_2^L E_L^{j} Q_1^L u^{j}\|_{E^{j}},
  \end{aligned}
  \end{equation}
  which, together with \eqref{lambdajto1l} and \eqref{eljtou1j}, completes the proof of \eqref{r21final}.

Further, due to Lemma \ref{eljl2upper}, \eqref{lambdajto1l}, {\eqref{blmtoelj}} and \eqref{eljtou1j}, we have
\begin{equation}\label{r2j1}
\begin{aligned}
     \|\alpha^{j} Q_2^L Q_{U^j} B_L^{j} r^{j}\|_{E^{j}}&=\alpha^{j} \|Q_2^L b(B_L^{j} r^{j},u^{j})u^{j}\|_{E^{j}}=\alpha^{j} |b(B_L^{j} r^{j},u^{j})|\|e_L^{j}\|_{E^{j}}\\
     &\leq \alpha^{j} \sqrt{\lambda^{j}-\lambda_{1,L}}\|B_L^{j} \tilde{A}_L^{j} u^{j}\|_0\\
       & \leq \alpha^{j} \sqrt{\lambda^{j}-\lambda_{1,L}} \|(I- E_L^{j})Q_1^L u^{j}\|_0 +\alpha^{j} \sqrt{\lambda^{j}-\lambda_{1,L}}\|(I- E_L^{j})Q_2^L u^{j}\|_0\\
       & \leq  \alpha^{j} \sqrt{\lambda^{j}-\lambda_{1,L}} \|(I- E_L^{j})Q_1^L u^{j}\|_0 +C \delta_0(\lambda_1) \|e_L^{j}\|_{E^{j}}.
\end{aligned}
\end{equation}
It follows from \eqref{eljsum} that
\begin{equation}\label{elju1l}
  \|(I- E_L^{j})Q_1^L u^{j}\|_0 =\|\sum_{l=0}^L K_l^{j} E_{l-1}^{j} Q_1^L u^{j}\|_0 \leq \|K_0^{j} Q_1^L u^{j}\|_0 + \| \sum_{l=1}^L K_l^{j} E_{l-1}^{j} Q_1^L u^{j}\|_0.
\end{equation}
By the definition of $K_0^{j}$, we have
\begin{equation}
  \|K_0^{j} Q_1^L u^{j}\|_0 =\| (\tilde{A}_0^{j})^{-1} Q_2^0 Q_0 \tilde{A}_L^{j} Q_1^L u^{j}\|_0 \leq C \|\tilde{A}_L^{j} Q_1^L u^{j}\|_0=C(\lambda^{j}-\lambda_{1,L})\|Q_1^L u^{j}\|_0 .
\end{equation}
{Taking $w_L=\sum_{l=1}^L K_l^{j} E_{l-1}^{j} Q_1^L u^{j}$ and $v_L=Q_1^L u^{j}$ in Lemma \ref{klfl1} and using Lemma \ref{eljl2upper}}, we get
\begin{equation}\label{n1_}
\begin{aligned}
     \|\sum_{l=1}^L K_l^{j} E_{l-1}^{j} Q_1^L u^{j}\|_{0} & \leq C  \left((Q_1^L u^{j},Q_1^L u^{j})_{E^{j}}+ (\lambda^{j}-\lambda_{1,L}) \|E_L^{j} Q_1^L u^{j}\|_0^2 \right)^{1/2}\\
     & \leq  C \delta_0 (\lambda_1)\|Q_1^L u^{j}\|_0.
\end{aligned}
\end{equation}
Finally, substituting \eqref{elju1l}$\sim$\eqref{n1_} into \eqref{r2j1}, we may obtain \eqref{r22final}.
\end{proof}

\section{Numerical experiments}
In this section, we present several examples to illustrate the optimality of our method, including the uniform convergence rate with respect to degrees of freedom and the optimal complexity $O(N)$. This paper focuses on eigenvalue problems with singularity in three specific cases: (\uppercase\expandafter{\romannumeral1}) the L-shaped domain, (\uppercase\expandafter{\romannumeral2}) the crack domain, (\uppercase\expandafter{\romannumeral3}) highly discontinuous coefficients. We use the adaptive $P_1$-conforming finite element and the newest vertex bisection strategy to solve the principal eigenpair of the second order elliptic operator in numerical experiments. When $\rho_{\max}\gg 1=\rho_{\min}$, we use the local error estimator $\eta_{L}(u_{1}^{ L}, E)$ defined in Subsection 6.3. When $\rho(x) \equiv 1$, we  utilize the local error estimator defined in \cite{dai2008convergence}. To make its definition clear, we first introduce two quantities: 
$$
\begin{aligned}
\mathcal{R}_T\left(u_{1}^{L}\right) & :=\lambda_{1}^L u_{1}^L+\nabla \cdot\left( \nabla u_{1}^L\right) \quad \text { in } T \in \mathcal{T}_L, \\
J_E\left(u_{1}^L\right) & :=-(\nabla u_{1}^L) ^{+} \cdot \nu^{+}- (\nabla u_{1}^L)^{-} \cdot \nu ^{-}:=\left[\left[ \nabla u_{1}^L\right]\right]_E \cdot v_E \quad \text { on } E \in \mathcal{E}_L,
\end{aligned}
$$
where $E$ is the common side of elements $T^+$ and $T^-$  with unit outward normals $\nu^{+}$ 
and $\nu^{-}$, $\nu_{E}$ = $\nu^{-}$ and $\mathcal{E}_L$ represents the set of interior edges of $\mathcal{T}_L$.
The local error estimator is defined as follows:
$$
\eta_L^2\left(u_{1}^L, T\right):=h_T^2\left\|\mathcal{R}_T\left(u_{1}^L\right)\right\|_{0, T}^2+\sum_{E \in \mathcal{E}_L, E \subset \partial T} h_E\left\|J_E\left(u_{1}^L\right)\right\|_{0, E}^2,
$$
where $h_T$ is the diameter of $T$.
Algorithm 1 terminates iteratively when
$$ stop.=|\lambda^{j+1}-\lambda^{j}|\leq tol = 1e^{-10}.$$
\subsection{L-shaped domain}
In this subsection, we present numerical results for the Laplacian eigenvalue problem ($\rho(x)\equiv 1$) on the L-shaped domain with a local Jacobi smoother parameter $\gamma=0.8$.
\begin{example}
  We consider \eqref{modelproblem} on L-shaped domain $\Omega=(-1,1)\times (-1,1)\backslash [0,1) \times  ( -1,0]$. 
\end{example}
\begin{figure}[H]
  \centering
\includegraphics[width=0.45\textwidth]
  {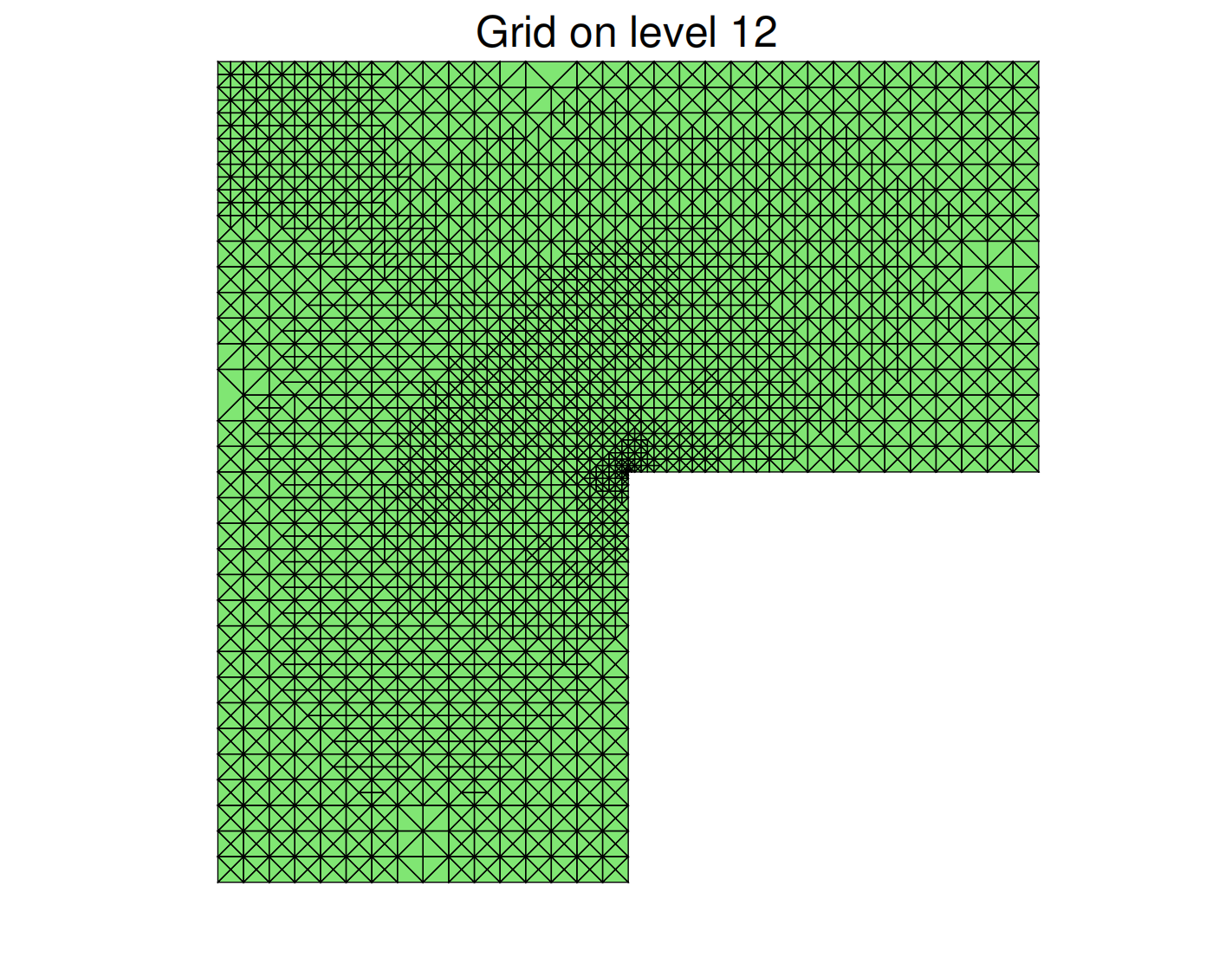}
  \caption{The local refined mesh on adaptive level 12}\label{figure1}
\end{figure}
\par  We first introduce some notations: $d.o.f.$ denotes degrees of freedom; $it.$ denotes the number of iterations; $\lambda_1^k$ denotes the final iterative solution at level $k$ obtained by our iterative algorithm; $\eta$ denotes the a posteriori estimator and  $stop.$ denotes the value of $|\lambda^{j+1}-\lambda^{j}|$ when our iterative algorithm
stops. The mesh obtained after 12 adaptive iterations is shown in Figure \ref{figure1}. Numerical results in Table \ref{table1} demonstrate that the number of iterations  does not increase with increasing mesh levels and degrees of freedom, which indicates that the convergence rate is optimal. The curves are parallel in Figure \ref{figure2}, which shows that the iterative solution (on each level) converges to the discrete solution of \eqref{discretevariational}. Furthermore, the curve in Figure \ref{figure3} illustrates that the algorithm has optimal computational complexity $O(N)$.
\begin{table}
  \centering
    \caption{The number of iterations on each level}
    \label{table1}
  \begin{tabular}{ c c  c c c}
  \toprule
  $level\ (k)$ & $d.o.f$ & $it.$ & $stop.$ & $\lambda_1^{k}$ \\
  \midrule
  20 & 23346 &6& 5.1114e-11& 9.641716304 \\
  \midrule
  24 & 61979 & 6& 6.2871e-11&9.640489992 \\
   \midrule
  28 & 158501 & 6& 8.5283e-11&9.640003301 \\
 \midrule
  32 & 414279 &5 &5.3108e-11 &9.639835271 \\
  \midrule
  36 & 1107619 &5 & 1.6225e-11&9.639767277 \\
  \midrule
  & & & \\ [-9pt]
  40 & 2877315 & 5&5.0788e-11 &9.639739416  \\
  \botrule
  \end{tabular}
\end{table}
\begin{figure}[h]
  \begin{minipage}[t]{0.5\textwidth}
    \centering
\includegraphics[width=0.6\textwidth]{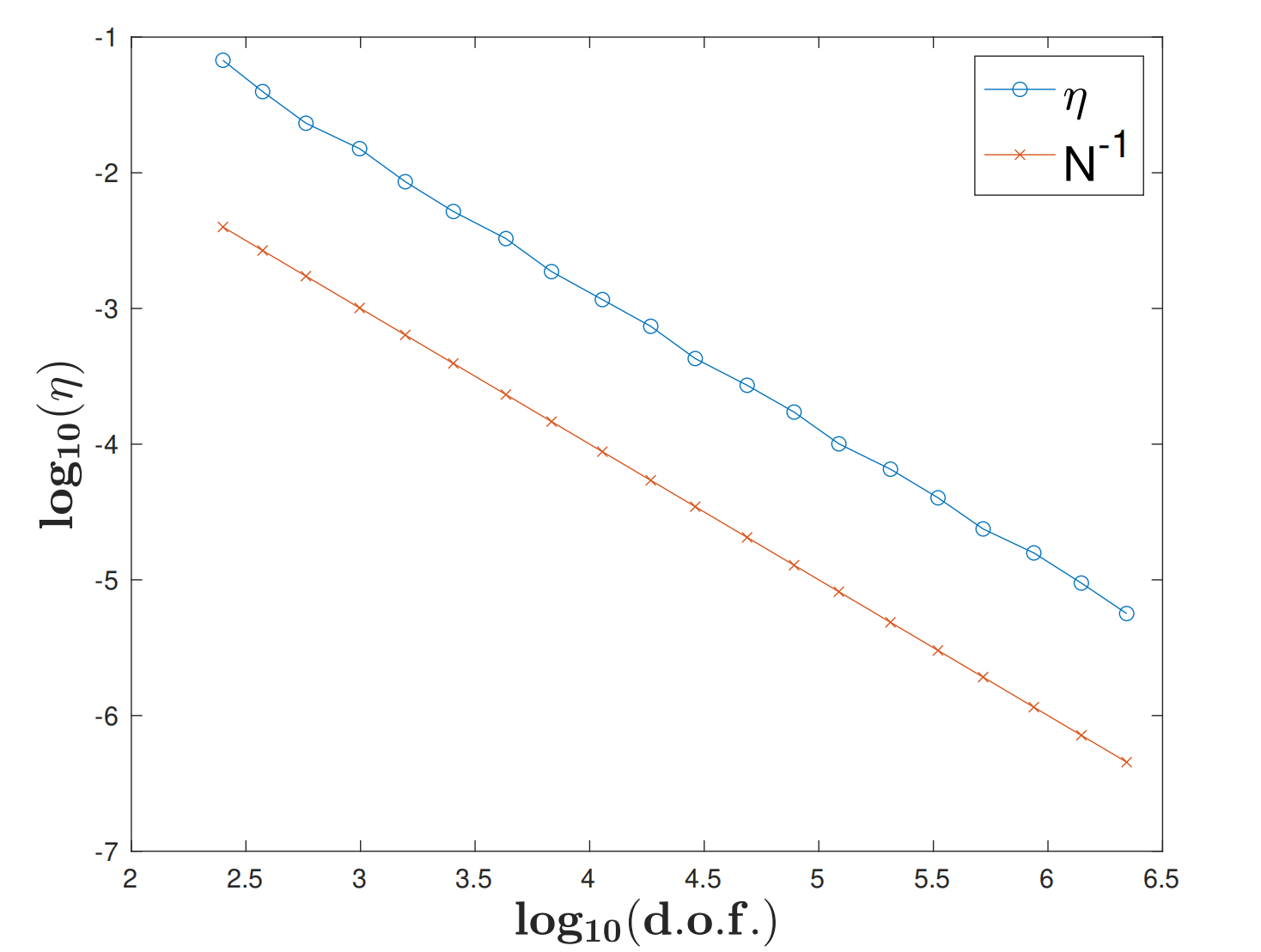}
    \caption{A posteriori estimator }\label{figure2}
  \end{minipage}
  \begin{minipage}[t]{0.5\textwidth}
    \centering
\includegraphics[width=0.6\textwidth]{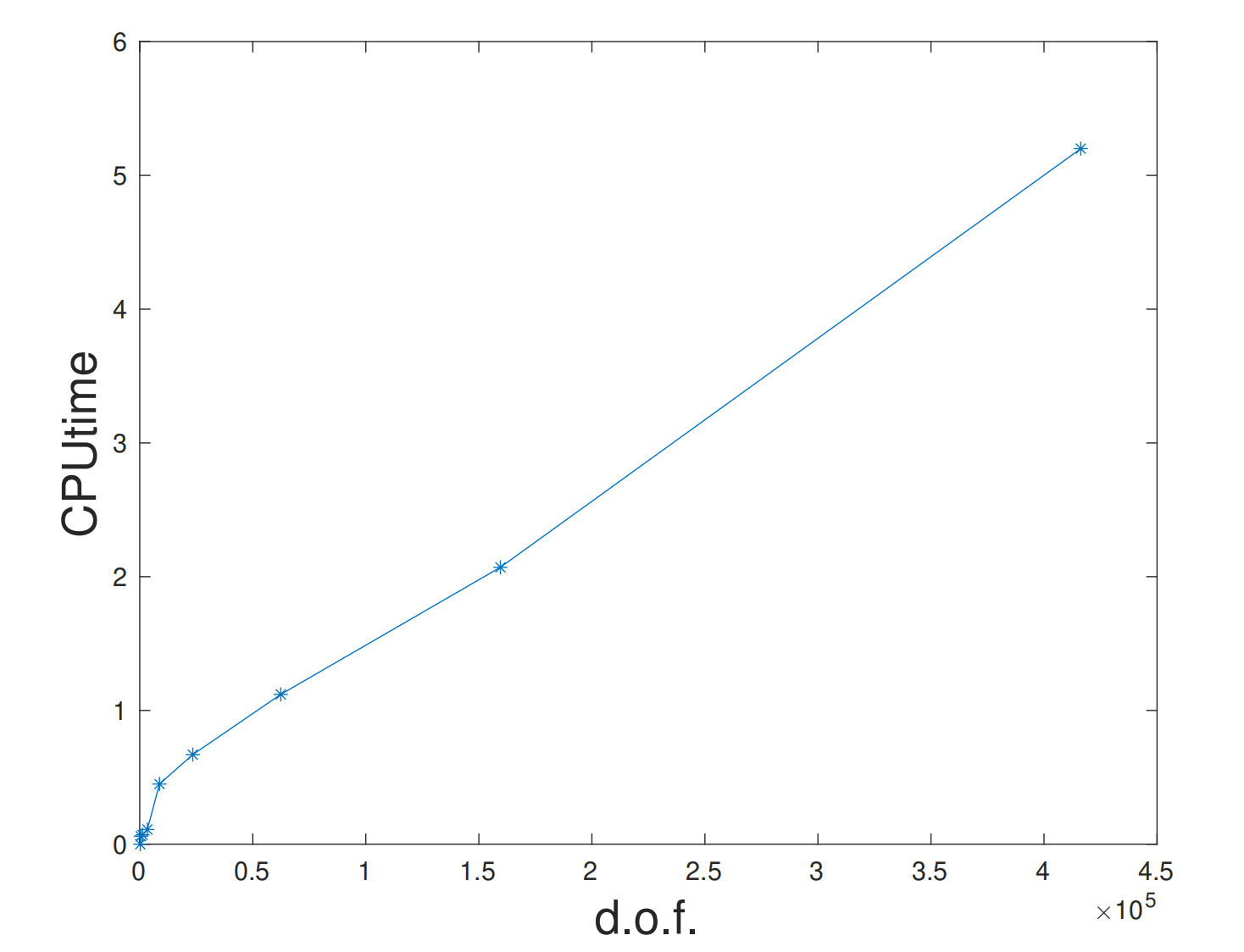}
    \caption{CPU time} \label{figure3}
  \end{minipage}
  \end{figure}
  \subsection{Crack domain}
This subsection presents numerical results for the Laplacian eigenvalue problem ($\rho(x)\equiv 1$) on the crack domain with a local Jacobi smoother parameter $\gamma=0.8$.
\begin{example}
  We consider \eqref{modelproblem} on crack domain $\Omega=(-1,1)^2 \backslash [0,1] \times \{0\}$.
\end{example}
\begin{figure}[!htb]
  \centering
\includegraphics[width=0.4\textwidth]{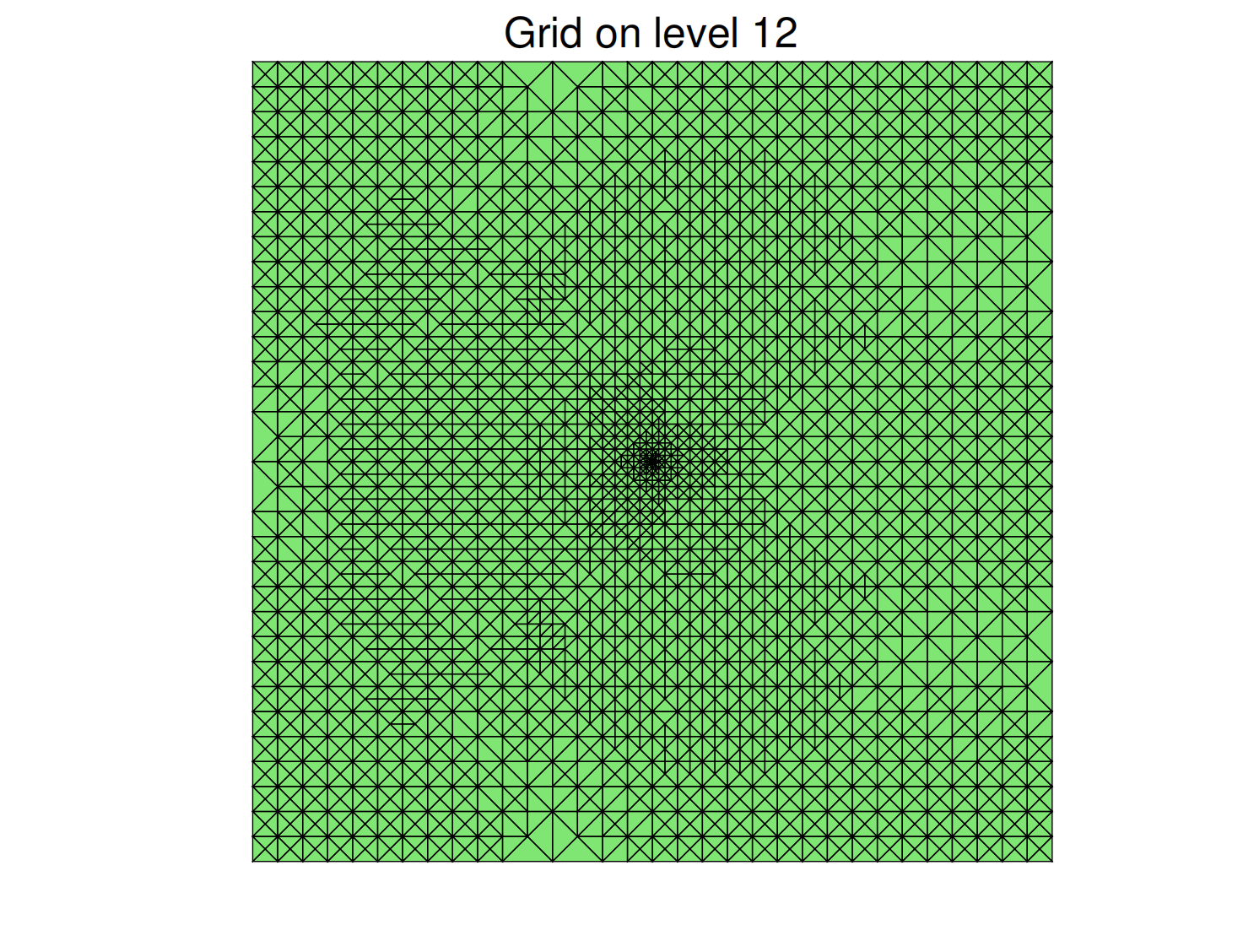}
  \caption{The local refined mesh on adaptive level 12}\label{figure4}
\end{figure}
\par Figure \ref{figure4} presents the locally refined mesh obtained after 12 adaptive steps. The numerical results presented in Table \ref{table2} indicate that the number of iterations remains stable as mesh levels and degrees of freedom increase. This suggests that our algorithm exhibits a uniform convergence rate with respect to mesh levels and degrees of freedom. The curves are parallel in Figure \ref{figure5}, which shows that the iterative solution (on each level) converges to the discrete solution of \eqref{discretevariational} in the crack case. Moreover, it is seen from Figure \ref{figure6} that the algorithm has an optimal computational complexity $O(N)$. These results are consistent with our theoretical findings.
\begin{table}[htbp]
  \centering
  \setlength{\tabcolsep}{5mm}{
    \caption{The number of iterations on each level}
    \label{table2}
        \renewcommand{\arraystretch}{1}
  \begin{tabular}{ c c c  c c }
  \toprule
  $level\ (k)$ & $d.o.f$ & $it.$ & $stop.$ &$\lambda_1^{k}$ \\
  \midrule
  20 & 24200 &7 & 5.6666e-11& 8.375251473 \\
  \midrule
  24 & 62699 & 7& 5.0886e-11 & 8.372775444 \\
   \midrule
  28 & 159500 & 7&8.1155e-11 & 8.371945433 \\
  \midrule
  32 & 422886 & 7& 3.1701e-11& 8.371578378 \\
  \midrule
  36 & 1109565 & 6& 7.6005e-11& 8.371427991 \\
  \midrule
  & & & \\ [-9pt]
  40 & 2762819 & 6& 6.1393e-11 &  8.371370412  \\
  \botrule
  \end{tabular}}
\end{table}
\begin{figure}[H]
  \begin{minipage}[t]{0.5\textwidth}
    \centering
    \includegraphics[width=0.6\textwidth]{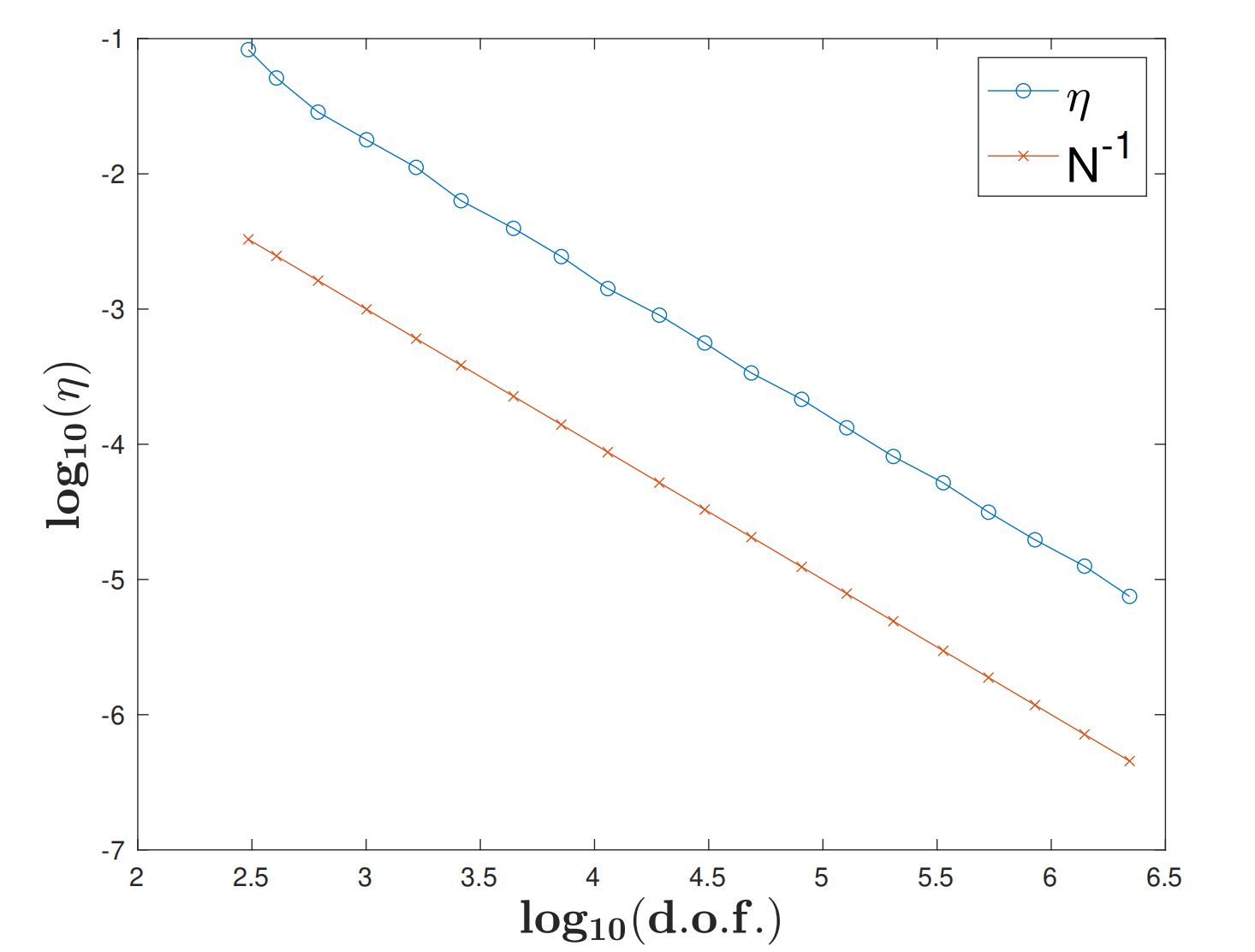}
    \caption{A posteriori estimator}\label{figure5}
  \end{minipage}
  \begin{minipage}[t]{0.5\textwidth}
    \centering
\includegraphics[width=0.6\textwidth]{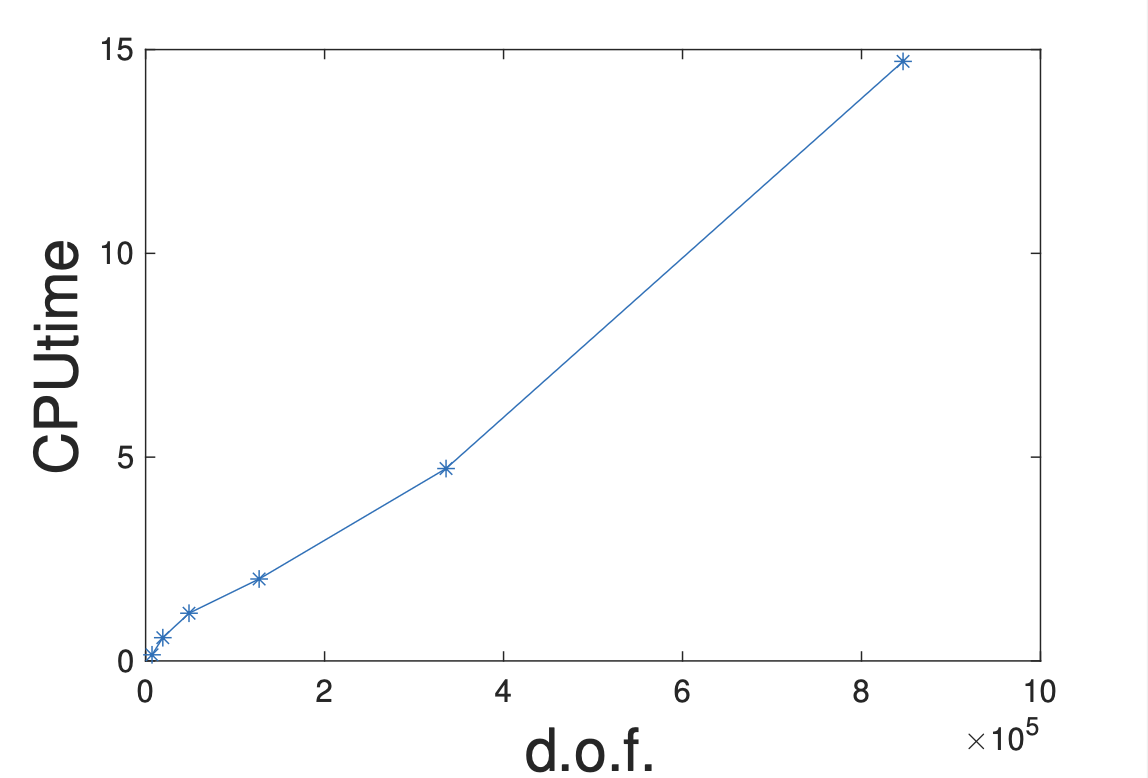}
    \caption{CPU time} \label{figure6}
  \end{minipage}
  \end{figure}
  
  \subsection{Highly discontinuous coefficients eigenvalue problem}
\begin{example}
  We consider the highly discontinuous coefficients eigenvalue problem on domain $\Omega=(0,1)^2$,
\begin{equation}\label{jumpproblem}
   \begin{cases}
      -\nabla\cdot (\rho(x)\nabla u )=\lambda u\ \ \ \ &\text{in} \ \ \Omega,\\
     \ \ \ \ \ \ \ \ \ \ \ \ \  \ u=0\ \ \ \ & \text{on} \ \ \partial \Omega,
   \end{cases}
\end{equation}
where $\rho(x)$ is a piecewise constant function
\begin{equation}\notag
\rho(x)=
    \begin{cases}
       \ \ \  1 \ \   &\text{in} \ \Omega_1\ \text{or}\ \Omega_4,\\
        \mu \gg 1 \ \ \ \   &\text{in}\ \Omega_2\ \text{or}\ \Omega_3, 
    \end{cases}
\end{equation}
where $\Omega_1=(0,0.5)\times(0.5,1),$ $\Omega_2=[0.5,1)\times[0.5,1),$ $\Omega_3=(0,0.5]\times(0,0.5]$ and $\Omega_4 = (0.5,1)\times(0,0.5)$ in Figure \ref{figure51}.
\end{example}
\par In this subsection, we utilize the maximum marking strategy \cite{MR745088} to solve \eqref{jumpproblem}. To clarify the local error estimator \cite{czm2002} used in this subsection, we introduce some notations. For any $T \in \mathcal{T}_L$ and $E \in \mathcal{E}_L$, let
$$
\omega_T=\cup\left\{T^{\prime} \in \mathcal{T}_L: \bar{T}^{\prime} \cap \bar{T} \neq \emptyset\right\}, \quad \omega_E=\cup\left\{T^{\prime} \in \mathcal{T}_L: \bar{T}^{\prime} \cap \bar{E} \neq \emptyset\right\} .
$$
Further, we define
$$
\Lambda_T:= \begin{cases}\max _{T^{\prime} \in \omega_T}\left(\frac{\rho_T}{\rho_{T^{\prime}}}\right) & \text { if } T \text { has one singular node, } \\ 1 & \text { otherwise, }\end{cases}
$$
where $\rho_T$ is the constant value of $\rho(x)$ on $T \in \mathcal{T}_L$.
For any $E \in \mathcal{E}_L$, denote by $\Omega_E$ the collection of two elements sharing the side $E$. Let $\rho_E=\max_{T \in \Omega _E} (\rho_T).$ The local error estimator associated with $E \in \mathcal{E}_L$
is defined as:
$$
\eta_L^2(u_1^{L},E)=\sum_{T \in \Omega_E} \Lambda_T \left\|h_T  \rho_T^{-1 / 2} \lambda_{1}^{L} u_{1}^{L} \right\|_{L^2 (T)}^2+\Lambda_E\left\|h_E^{1/2} \rho_E^{-1/2} J_E(u_1^{L})\right\|_{L^2 (E)}^2 \quad \text { on } E \in \mathcal{E}_L,
$$ 
where $\Lambda_E=\max_{T\in \Omega_E} (\Lambda_T).$
As shown in Figure \ref{figure52}, the mesh is refined near the stripes where the coefficient function $\rho(x)$ changes. From Table \ref{table5}, the number of iterations remains stable as both mesh levels and degrees of freedom increase. Furthermore, the number of iterations also remains stable with the discontinuous coefficients increasing. The above phenomena show that our method has a uniform convergence rate with respect to mesh levels, degrees of freedom, and discontinuous coefficients. 
Figure \ref{figure3eta} shows that the iterative solution (on each level) converges to the discrete solution of \eqref{jumpproblem}. Additionally, the curve in Figure \ref{figure3cpu} illustrates that the computational complexity of our algorithm is optimal $O(N)$ even with the discontinuous coefficients.
 \begin{figure}[h]
  \begin{minipage}[t]{0.5\textwidth}
    \centering
\includegraphics[width=0.65\textwidth]{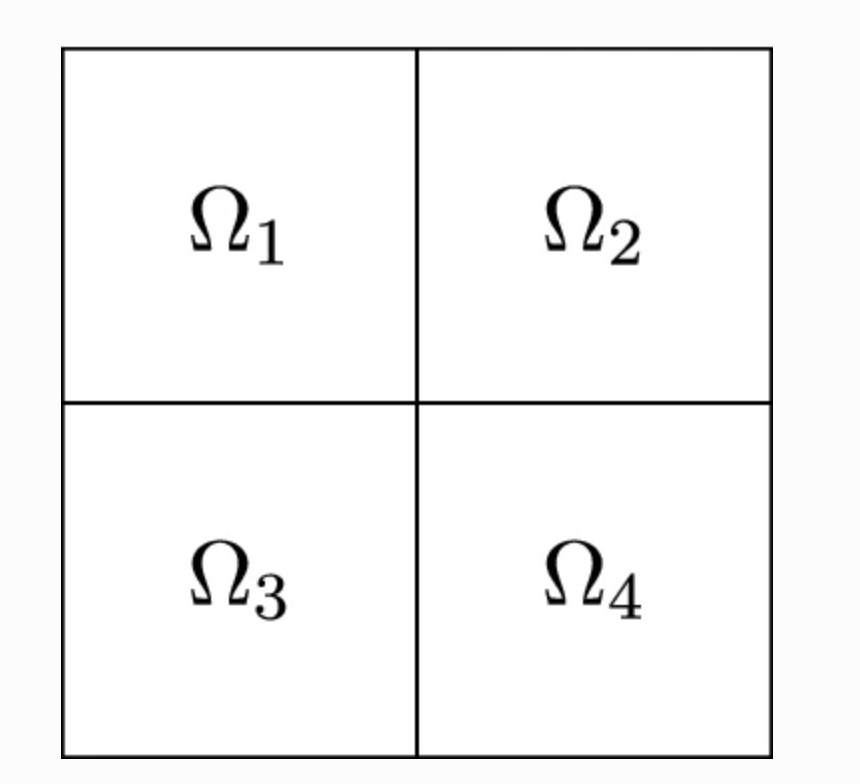}
    \caption{The definition of $\Omega_i,\ i=1,2,3,4$ }\label{figure51}
  \end{minipage}
  \begin{minipage}[t]{0.5\textwidth}
    \centering
\includegraphics[width=0.85\textwidth]{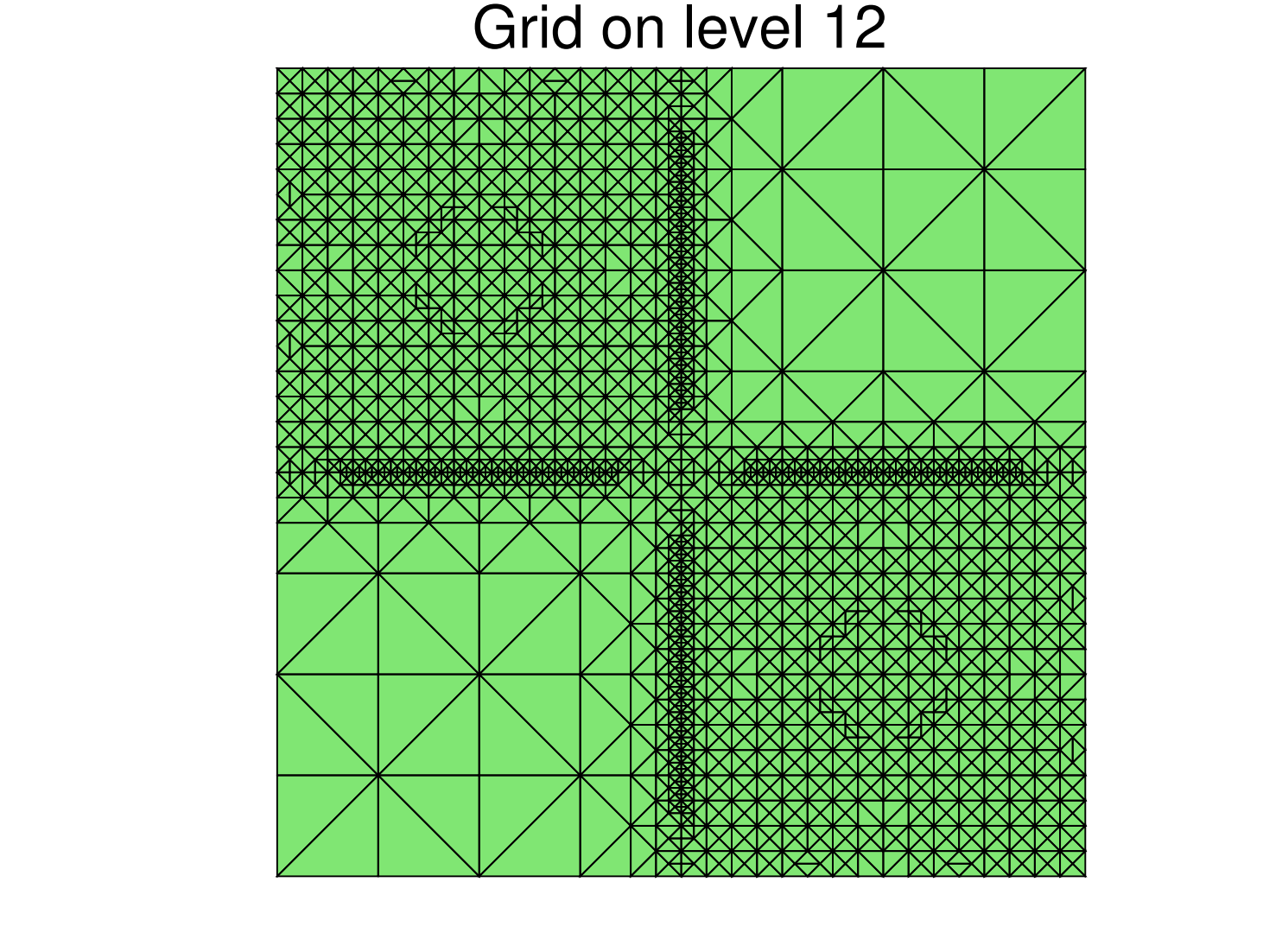}
    \caption{The local refined mesh on adaptive level 12 with $\mu=10^8$} \label{figure52}
  \end{minipage}
  \end{figure}
  
\begin{figure}[H]
  \begin{minipage}[t]{0.5\textwidth}
    \centering
\includegraphics[width=0.6\textwidth]{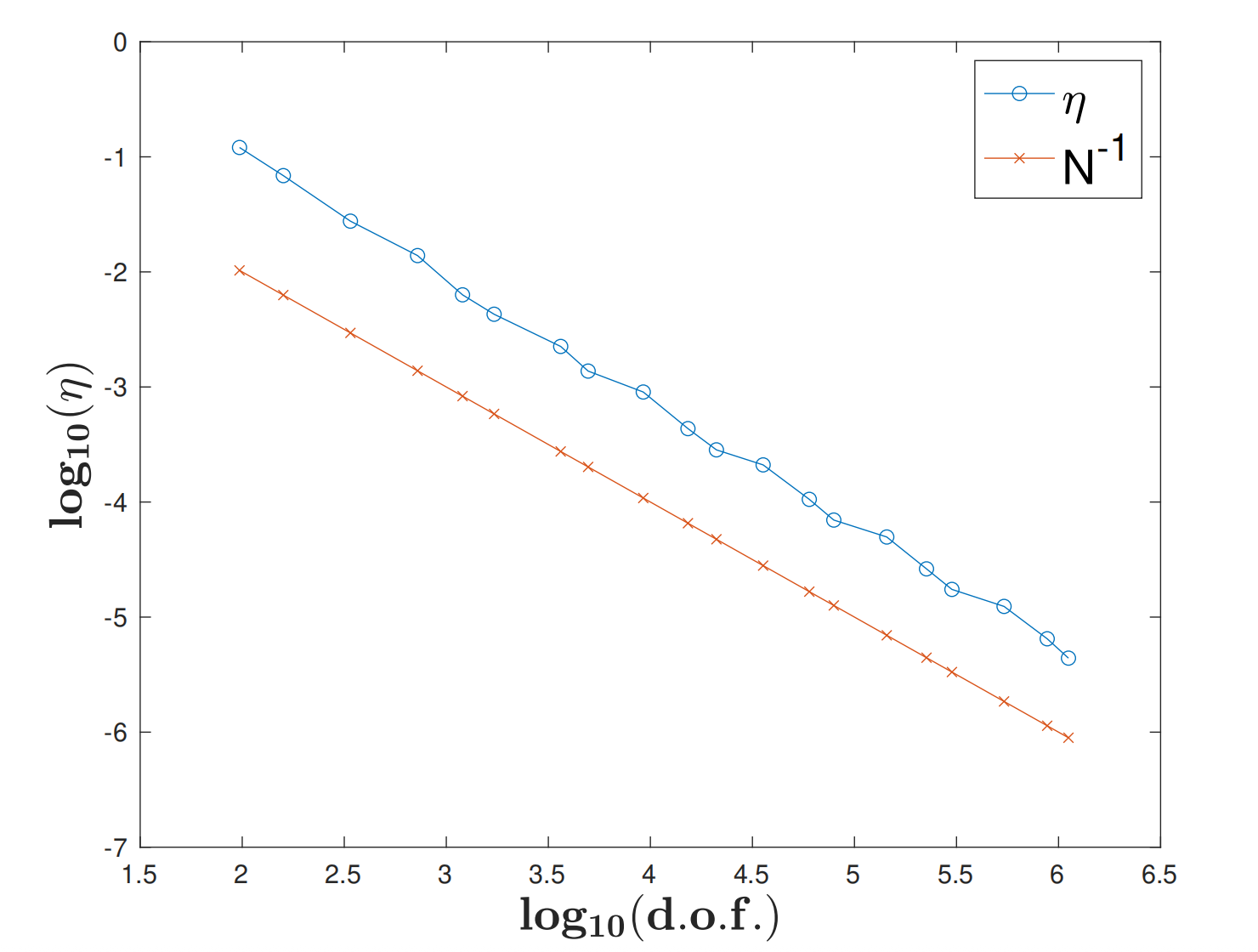}
    \caption{A posteriori estimator with $\mu=10^8$}\label{figure3eta}
  \end{minipage}
  \begin{minipage}[t]{0.5\textwidth}
    \centering
\includegraphics[width=0.6\textwidth]{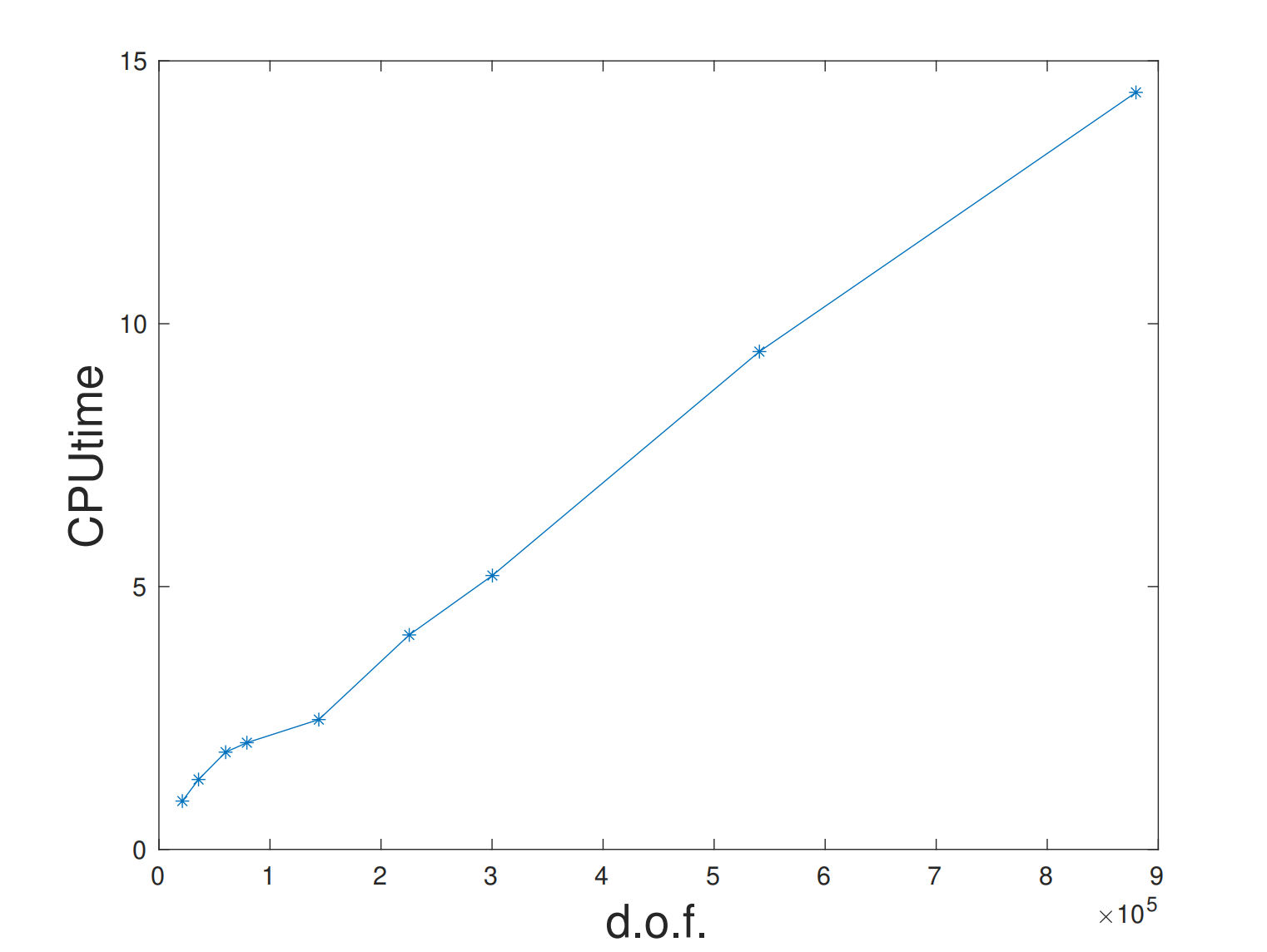}
    \caption{CPU time with $\mu=10^8$} \label{figure3cpu}
  \end{minipage}
  \end{figure}
  
 \begin{table}[h]
    \centering
    \caption{The number of iterations of different discontinuous coefficients cases}
    \label{table5}
\begin{tabular}{c  c  c  ccccc }
\toprule \multirow{3}{*}{$\mu=10^2$} & \multicolumn{2}{c}{ Level }  & 24 & 28 & 32 & 36 & 40 \\
\cmidrule {2-8} & \multicolumn{2}{c}{ $d.o.f$ }  & 45845 & 88399 & 253465 & 657595 & 1299847 \\
 \cmidrule{2-8}&\multicolumn{2}{c}{ $it.$ }  & 8 & 8 & 7 & 7 & 6 \\
\midrule 
\multirow{3}{*}{$\mu=10^4$} & \multicolumn{2}{c}{ Level }  & 24 & 28 & 32 & 36 & 40 \\
\cmidrule {2-8} & \multicolumn{2}{c}{ $d.o.f$ }  & 48667 & 96467 & 259453 & 719585 & 1393497 \\
 \cmidrule {2-8} &\multicolumn{2}{c}{ $it.$ }  & 7 & 7 & 7 & 6 & 6 \\
 \midrule
\multirow{3}{*}{$\mu=10^6$} & \multicolumn{2}{c}{ Level }  & 24 & 28 & 32 & 36 & 40 \\
\cmidrule{2-8} & \multicolumn{2}{c}{ $d.o.f$ }  & 48667 & 96497 & 259521 & 719807 & 1396113 \\
 \cmidrule {2-8} &\multicolumn{2}{c}{ $it.$ }  & 7 & 7 & 6 & 6 & 5 \\
\midrule 
\multirow{3}{*}{$\mu=10^8$} & \multicolumn{2}{c}{ Level }  & 24 & 28 & 32 & 36 & 40 \\
\cmidrule {2-8} & \multicolumn{2}{c}{ $d.o.f$ }  & 48667 & 96473 & 259521 & 719807 & 1396153 \\
 \cmidrule{2-8} &\multicolumn{2}{c}{ $it.$ }  & 7 & 7 & 6 & 6 & 5 \\
 \bottomrule
\end{tabular}
\end{table}

\section{Conclusion}\label{sec7}
Based on the local multilevel method, we proposed a new preconditioned Jacobi-Davidson method for solving elliptic eigenvalue problems on adaptive meshes. The proposed method may be used to efficiently address the singularity of the first eigenfunction. It is proven that the convergence factor is optimal with respect to degrees of freedom and mesh levels. Numerical results demonstrate that our method accurately captures the locations of singularities while achieving the optimal computational complexity $O(N)$. 

\section*{Declarations}

\begin{itemize}
\item \textbf{Funding} The second author is supported by National Natural Science Foundation of China (Grant No. 12401481). The third author is supported by National Natural Science Foundation of China (Grant No. 12331015).

\item \textbf{Conflict of interest/Competing interests} There is no potential conflict of interest.

\item \textbf{Ethics approval and consent to participate} This article does not contain any studies involving animals or human participants.

\item \textbf{Consent for publication} Not applicable.

\item \textbf{Data availability} This research does not use any external or author-collected data.

\item \textbf{Materials availability} Not applicable.

\item \textbf{Code availability} Not applicable.

\item \textbf{Author contributions} All authors contributed equally.
\end{itemize}

\bibliography{sn-bibliography}
\end{document}